\colorlet{darkblue}{blue!50!black}
\colorlet{darkblue}{blue!50!black}
\newcommand{\R}{{\mathbb R}}
\newcommand{\ty}{\infty}
\newcommand{\KK}{{\cal K}}
\newcommand{\PP}{{\cal P}}
\def\mE{{\mathbb E}}
\def\dif{{\mathord{{\rm d}}}}
\def\cF{{\mathcal F}}
\def\mP{{\mathbb P}}
\def\mR{{\mathbb R}}
\def\cA{{\mathcal A}}
\def\cV{{\mathcal V}}
\def\cR{{\mathcal R}}
\def\eps{\varepsilon}
\def\cL{{\mathcal L}}
\newcommand{\lag}{\langle}
\newcommand{\rag}{\rangle}
\newcommand{\dd}{{\textup d}}
\newcommand{\BBBBB}{{\mathcal B}}
\theoremstyle{plain}
\newtheorem*{lemma*}{Lemma}
\newtheorem{theorem}{Theorem}[section]
\newtheorem{lemma}[theorem]{Lemma}
\newtheorem{proposition}[theorem]{Proposition}
\theoremstyle{definition}
\newtheorem{definition}[theorem]{Definition}
\theoremstyle{remark}
 \newtheorem{remark}[theorem]{Remark}
\numberwithin{equation}{section}
\begin{document}

\author{
Xuhui
Peng\,\footnote{1.\,MOE-LCSM, School of Mathematics and Statistics, Hunan Normal University, Changsha, Hunan410081, P.R.China;
 2.\,Key Laboratory of Control and Optimization of Complex Systems, Hunan Normal University, College of Hunan Province, Changsha 410081, China
 e-mail: \href{mailto:xhpeng@hunnu.edu.cn}{xhpeng@hunnu.edu.cn}}\,
\and
Lihu  Xu\,\footnote{1.\,Department of Mathematics, Faculty of Science and Technology, University of Macau, Macau, China; 2.\,Zhuhai UM Science \& Technology Research Institute, Zhuhai, China.  e-mail: \href{mailto:lihuxu@umac.mo}{lihuxu@um.edu.mo}} }

 \date{\today}

\title{Large deviations principle  for  2D Navier-Stokes equations with  space time localised noise}
\date{\today}
\maketitle

\begin{abstract}
We consider a stochastic 2D Navier-Stokes equation in a bounded domain.
The random force is assumed to be non-degenerate and periodic in time, its law has a support localised with respect to both  time and space. Slightly strengthening the conditions in the pioneering work about exponential ergodicity by Shirikyan \cite{shirikyan-asens2015},
we prove that the stochastic system satisfies Donsker-Varadhan typle large deviations principle. Our proof is based on a criterion of \cite{JNPS-cpam2015} in which we need to verify uniform irreducibility and uniform Feller property for the related Feynman-Kac semigroup.

\smallskip
\noindent
{\bf AMS subject classification:}  35Q30, 60H15, 60J05, 60F10,  90B05.

\smallskip
\noindent
{\bf Keywords:} stochastic 2D Navier-Stokes equations, Donsker-Varadhan large deviations principle, space time localized noises.

\end{abstract}


\setcounter{section}{0}

\section{Introduction}
\label{s0}
In this paper, we consider a 2D Navier-Stokes system in a bounded  domain
$D\subseteq \mR^2$
with smooth boundary $\partial D:$
\begin{eqnarray}
\label{3-1}
\left\{
\begin{split}
  & \partial_t{u}+\langle u,\nabla u\rangle-\nu \Delta u+\nabla p=\eta(t,x), \text{div}
  u=0,x\in D
  \\  & u|_{\partial D}=0,
  \\   & u(0,x)=u_0(x).
  \end{split}
  \right.
\end{eqnarray}
Here $u=(u_1,u_2)$  and $p$ are the velocity and the pressure of the fluid respectively, $\nu>0$ is the viscosity, and $\eta$ is the external random force to be specified below.

Let $I_k$  be  the indicator function of the interval $(k-1,k)$ and assume that   $\eta_k$
is a sequence of i.i.d random variables in $L^2([0,1]\times  D)$ which  is localised in both time and space; see   the  hypothesis \textbf{(H1)} below for details.
In this paper,
$\eta$ is a random force with the form
\begin{eqnarray}
  \eta(t,x)=\sum_{k=1}^\infty I_k(t)\eta_k(t-k+1,x), \quad t\geq 0.
\end{eqnarray}
%
Let us denote by $\textbf{n}$ the outward normal to the boundary $\partial D$ and introduce the space
\begin{eqnarray}
\label{5-1}
  H=\big\{u\in L^2(D,\mR^2):\text{div } u=0 \text{ in } D, \langle u,\textbf{n} \rangle=0
  \text{ on } \partial D \big\}
\end{eqnarray}
which will be endowed with the usual $L^2$ norm $\|\cdot \|.$

We fix  an  open set $Q\subseteq [0,1] \times  D$ and denote by $\{\phi_j\}\subseteq H^1(Q,\mR^2)$
an orthonormal basis in $L^2(Q,\mR^2)$. Let $\chi\in C_0^\infty(Q)$ be a non-zero function and let $\psi_j=\chi \phi_j.$
The following hypotheses \textbf{(H1)} and \textbf{(H2)} are both from \cite{shirikyan-asens2015}.

\begin{description}
  \item \textbf{(H1)} \textbf{Structure of the noise.} The random variables $\eta_k$ have the following form
\begin{eqnarray}
  \eta_k=\sum_{j=1}^\infty b_j\xi_{jk} \psi_j(t,x),
\end{eqnarray}
where $\xi_{jk}$ are { i.i.d.} scalar random variables such that
$|\xi_{jk}|\leq 1$ with probability 1, and  $\{b_j\}\subseteq \mR$ is a non-negative sequence such that
\begin{eqnarray*}
  B:=\sum_{j=1}^\infty b_j\|\psi_j\|_1<\infty,
\end{eqnarray*}
where $\|\cdot \|_1$  denotes  usual Sobolev norm  on  $H^1(Q,\mR^2)$.
Moreover,  $\xi_{jk}$  has a $C^1$-smooth density $\rho_j$ with respect to the Lebesgue measure on the real line.
\end{description}

Let us denote by $\mathcal{K} \subseteq L^2(Q,\mR^2)$ the support of the law of $\eta_k.$ The hypotheses imposed on $\KK$ imply that $\KK$ is a compact subset in $H_0^1(Q,\mR^2).$

\begin{description}
\item \textbf{(H2)} \textbf{Approximate controllability.}
There exists a $\hat{u}\in H$ such that for any positive constants $R$ and $\eps$ one can find an integer $\ell \geq 1$ with the following property: given $v\in B_H(R):=\{u\in H, \|u\|\leq R\},$
 there are { $\theta_1,\cdots,\theta_\ell \in \mathcal{K} $} such that
 \begin{eqnarray}
   \|S_\ell(v,\theta_1,\cdots,\theta_\ell)-\hat{u}\|\leq \eps,
 \end{eqnarray}
 where $S_\ell(v,\theta_1,\cdots,\theta_\ell)$ is the vector $u_\ell$ defined by
 (\ref{4-1}) with $\eta_k=\theta_k$ and $u_0=v.$
\end{description}
In order to prove the large deviations principle, we need the following additional condition:
\begin{description}
  \item \textbf{(H3)} \textbf{Nondegenerate noise.} $b_j \ne 0$ for all $j$ with $b_j$ being defined in \textbf{(H1)}.
\end{description}

{By \cite[Theorem 2.1.18]{KS-book},  (\ref{3-1}) admits a unique strong solution $u(t)$.}
Since $u(1)$ depends on $u_0$ and $\eta_1$, we use
$S(u_0,\eta_1) $ to denote it. With these notations, we have
\begin{eqnarray}
\label{4-1}
  u_k=S(u_{k-1},\eta_k), \quad  k\geq 1.
\end{eqnarray}
As $\eta_k$ are i.i.d random variables in $L^2([0,1]\times  D)$, (\ref{4-1}) defines a homogeneous  family of Markov chains in $H.$ We denote it by $(u_k,\mP_u), u\in H$ and   use $P_k(u,\Gamma)$ to denote the transition function for $(u_k,\mP_u).$

The objective of this paper is to prove
the large deviations of the following  occupation measures
under hypotheses   \textbf{(H1)}--\textbf{(H3)}
\begin{eqnarray*}
  \zeta_k =\frac{1}{k}\sum_{j=0}^{k-1}\delta_{u_j}, k\geq 1.
\end{eqnarray*}
See Theorem \ref{1-7} below for  details.

There have been many literatures on the ergodicity  of stochastic 2D Navier-Stokes system,  see \cite{Flandoli95, EMS-2001, HM-2006,HM-2008} for the Gaussian noises case and \cite{KS-cmp2000} for the kick noises case, the monograph \cite{KS-book} gives a comprehensive review on the researches in these two directions. When the driven noises are L\'evy type, we refer the reader to \cite{DX-2011}, \cite{DXZ-2011} and the references therein.

There are very few papers on the ergodicity of stochastic partial differential equations (SPDEs) driven by physically localized noises. To the best of our
knowledge, there exist only { four}
 papers in this direction. Besides \cite{shirikyan-asens2015}, Shirikyan \cite{shirikyan-cup2017} proved the exponentially mixing for one-dimensional Burgers equation perturbed by a
stochastic forcing which is white in time and localized in space.
 When the noise is localized in physical space and degenerate in Fourier space,  Nersesyan \cite{VN-2021}
 proved that  the complex Ginzburg-Landau equation is exponential mixing.
{For the 2D Navier-Stokes system driven by a random force acting through the boundary, Shirikyan   \cite{Shi21} established  an    exponential mixing  property.}

 However,
there seem no works concerning the Donsker-Varadhan type large deviations principle (LDP) for SPDEs driven by physically localized noise,
the main motivation of this paper is to partly fill in this gap. { Continuing the pioneering work in \cite{shirikyan-asens2015} and
 slightly strengthening the conditions therein, we prove that the system \eqref{3-1} satisfies Donsker-Varadhan typle LDP.}

Our proof is based on a criterion established in \cite{JNPS-cpam2015} by verifying uniform irreducibility and uniform Feller properties for the related Feynman-Kac semigroup. Note that Donsker-Varadhan type LDPs have been extensively studied since the work \cite{DV-1975-83}. When Markov processes are strong Feller and irreducible,  Wu \cite{Wu-2001} established the hyper-recurrence criterion which were applied to study several SPDEs, see \cite{gourcy-2007b, gourcy-2007a, WXX-2021}.

  \subsubsection*{Acknowledgements}
  We would like to gratefully thank Vahagn Nersesyan for his kindly suggesting this research topic to us.
    Xuhui Peng would like to gratefully thank University  of  Macau for the hospitality, his research is supported in part by
  by NNSFC (No. 12071123), Scientific Research
Project of Hunan Province Education Department (No. 20A329) and  Program of Constructing Key Discipline in Hunan Province. Lihu Xu is supported in part by NNSFC grant (No.12071499), Macao S.A.R grant FDCT 0090/2019/A2 and University of Macau grant  MYRG2020-00039-FST.

 \subsubsection*{Notation}

In this paper, we use the following notation

\smallskip
\noindent
$H$ is the Hilbert space defined by (\ref{5-1}),    $B_H(a,R)$   is the closed   ball in~$H$ of radius~$R$ centred at~$a$. When~$a=0$, we simply write it as $B_H(R)$.


\smallskip
\noindent
For an  open set  $Q$ of a Euclidean space,   $H^m= H^m(Q)$  is the    Sobolev space of order $m$. We endow the space  $H^m$ with the usual Sobolev norms which are denoted by $\|\cdot \|_m$ or  $\|\cdot \|_{H^m(Q)}$.
$H_0^m=H_0^m(Q)$ is the closure in $H^m$ of the space of infinitely smooth functions with compact support.

\smallskip
\noindent
If $X$ is a metric space,  $L^\infty(X)$ denotes  the space of bounded Borel-measurable functions $f:X\to\R$ endowed~with the norm $\|f\|_\infty=\sup_{u\in X}|f(u)|$.

\smallskip
\noindent
$\BBBBB(X)$ is the    Borel $\sigma$-algebra of $X$.
$\PP(X)$ denotes  the set of  probability measures on $X$
endowed~with the topology of  the  weak convergence.
 For $\mu\in \PP(X)$ and~$f\in L^\ty(X)$,  we denote~$\lag f,\mu\rag=\int_{X}f(u)\mu(\dd u).$


\smallskip
\noindent We denote by $\mathcal{E}_m$ the vector span of $\psi_1,\cdots,\psi_m$
endowed with the $L^2$ norm and by $B_R$ the ball in $H^1(Q)$ of radius $R$ centred at origin.
$P_m$ stands the orthogonal projection in $L^2(Q,\mR^2)$ onto the m-dimensional space
$\mathcal{E}_m$.

\smallskip
\noindent
Let $\{e_j\}_{j=1}^\infty$ be an orthonormal basis in $H$ which consists of the eigenfunctions of the Stokes operator $L=-\nu\Pi \Delta,$ where $\Pi$ stands for the Leray projection in $L^2(D,\mR^2)$ onto the closed space $H.$
Let $\Pi_N$ be the orthogonal projection in $H$ on the vector space $H_N$
spanned by $e_1,\cdots,e_N.$

 \section{Main results}\label{S:1}

Let $X$ be a polish space  and let $\PP(X)$ be the space of probability measures on $X$ endowed with the topology of weak convergence.
A random probability measure  on  $X$ is defined as a measurable   mapping from a probability space $(\Omega,\cF,\mP)$ to $\PP(X).$
{A mapping $I : \PP(X) \to [0, +\ty]$ is  
 called a  \emph{rate function} 
 if it is lower-semicontinuous, and a rate function $I$ is said to be \emph{good}
 if   its level set~$\{\sigma\in\PP(X) : I(\sigma) \le \alpha \}$ is compact for any~$\alpha \ge0$.}
 A good rate function~$I$ is nontrivial
if its effective domain $D_I = \{\sigma \in  \PP(X) : I(\sigma) < \ty\}$ is not a singleton.

\begin{definition}
Let $\{\zeta_n,n\geq 1\}$  be a  sequence of random probability measures on $X$.       We say that  $\{\zeta_n,n\geq 1\}$  satisfies an LDP   with a    rate function $I: \PP(X)\to [0,+\ty]$,  if  the following two bounds~hold:
\begin{description}
\item[Upper bound.]
For any closed subset~$F\subset\PP(X)$, we have
$$
\limsup_{k\to\infty} \frac1k\log \sup \mP\{\zeta_k\in F\}\le -\inf_{\sigma\in F} I(\sigma).
$$
\item[Lower bound.]
For any open subset~$G\subset\PP(X)$, we have
$$
\liminf_{k\to\infty} \frac1k\log \inf\mP\{\zeta_k\in G\}\ge -\inf_{\sigma\in G}I(\sigma).
$$
\end{description}
\end{definition}

Let  $u_0$ be an arbitrary random variable in $H$ which is  independent of $\{\eta_k\}$ and  define a family of occupation measures  $\{\zeta_k\}$ by
\begin{align}
\label{px-1}
  \zeta_k=\frac{1}{k}\sum_{j=0}^{k-1}\delta_{u_j},
\end{align}
where $\delta_u$ is the delta measure concentrated at $u.$

In order to state our main result, let us first give the definition of $\cA_k$ and $\cA$ as below. For a  closed subset $B\subset H,$ define the sequence of sets
\begin{eqnarray*}
  \cA_0(B)=B,\quad { \cA_k(B)=\big\{ S(u,\eta): u\in \cA_{k-1}(B), \eta\in \mathcal K \big\}}, ~ k\geq 1,
\end{eqnarray*}
and denote
$$\cA(B)=\overline{\bigcup_{k \ge 0} \cA_k(B)},$$
where the closure is in $H$.
We shall call  $\cA(B)$ the domain of attainability from $B.$
Let $\hat u \in H$ be defined in {\bf (H2)}, we denote
$$\cA_k=\cA_k(\{\hat{u}\}) \  \ \forall k \ge 0, \ \ \ \  \ \cA=\cA(\{\hat{u}\}).$$

{Since $\cA$ a close subset of the Hilbert space $(H,\|\cdot\|),$
$(\cA,\|\cdot\|)$ is a closed metric space.
For this space, we use  $C(\cA)$  to  denote   the space of  continuous functions on $\cA$,
and  use  $C^1(\cA)$  to   denote    the space of  functions  on $\cA$ that are continuously Fr\'echet differentiable.}

Shirikyan \cite{shirikyan-asens2015}   proved that the stochastic system (\ref{4-1})
  possesses a unique stationary distribution $\mu$, which is exponentially
mixing.
  In Proposition   \ref{7-1} below,
  we further show that prove that the support of measure $\mu$ is $\cA$, i.e.,
  $$supp(\mu)=\cA.$$
The following theorem is the main result of this paper, its proof will be given in Section \ref{S:3}.
\begin{theorem}
\label{1-7}
  Let Hypotheses \textbf{(H1)},\textbf{(H2)} and \textbf{(H3)} hold  and let $u_0$ be an arbitrary random variable in $H$ whose law is supported by $\cA$.
   Then the family  $\{\zeta_k,k\geq 1\}$ of random measures on $\cA$ satisfies the LDP with a good rate function $I$ defined by
   \begin{eqnarray}
     I(\sigma)=\sup_{V\in C(\cA)} \big(\langle V,\sigma\rangle-Q(V) \big),
     \quad \sigma \in \PP(\cA),
   \end{eqnarray}
   where $Q(V)$ is defined as
   \begin{eqnarray}
     Q(V)=\lim_{k\rightarrow \infty}\frac{1}{k}\log \mE \exp{(\sum_{j=1}^kV(u_j))},
   \end{eqnarray}
  this limits  exists for any $V\in C(\cA)$\footnote{In the beginning of Section \ref{j-7}, we will prove that  $\cA$ is a compact set. Therefore,  if  $V\in C(\cA)$, then $V$ is also bounded.}
   and does not depend on the initial point $u_0.$
\end{theorem}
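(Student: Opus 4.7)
The plan is to invoke the abstract large deviations criterion of \cite{JNPS-cpam2015} for generalised Markov semigroups on a compact metric space. Applied to the Markov family $(u_k,\mP_u)$ restricted to the attainability domain $\cA$ and to the Feynman--Kac semigroup
\[
\mathfrak P_k^V f(u) := \mE_u\!\left[f(u_k)\exp\!\Bigl(\sum_{j=0}^{k-1}V(u_j)\Bigr)\right], \qquad V\in C(\cA),\ f\in C(\cA),
\]
that criterion reduces Theorem~\ref{1-7} to verifying three ingredients: (i) $\cA$ is compact and invariant under the dynamics; (ii) a uniform irreducibility property on $\cA$ with respect to a reference point; and (iii) the uniform Feller property for the family $\{\mathfrak P_k^V\}$ on $C(\cA)$.

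For (i), the compactness of $\cA$ follows from the smoothing effect of the Stokes semigroup combined with the compactness of the noise support $\mathcal K$ in $H^1_0(Q,\mR^2)$: the map $S(\cdot,\eta)$ is continuous from $H\times L^2$ to $H$ and sends bounded sets into sets bounded in a stronger Sobolev norm, hence relatively compact in $H$; the standard one-step dissipativity bound absorbs $\cA$ into a fixed ball of that stronger norm. Invariance $S(\cA,\mathcal K)\subset\cA$ is immediate from the definition. Existence, uniqueness, and exponential mixing of a stationary measure $\mu$ on $\cA$ are supplied by \cite{shirikyan-asens2015}, and the full-support property $\mathrm{supp}\,\mu=\cA$ by Proposition~\ref{7-1}. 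For (ii), given $\varepsilon>0$ I would use \textbf{(H2)} to produce, for each $v\in\cA$, an integer $\ell=\ell(v,\varepsilon)$ and controls $\theta_1,\dots,\theta_\ell\in \mathcal K$ steering $v$ to within $\varepsilon/2$ of $\hat u$; compactness of $\cA$ together with continuity of the finite-time flow $S_\ell$ in its initial datum allows $\ell$ to be chosen uniformly in $v\in\cA$. Hypothesis \textbf{(H3)} and the positivity of the $C^1$-densities $\rho_j$ then furnish a strictly positive probability that $(\eta_1,\dots,\eta_\ell)$ falls in any prescribed $L^2$-neighbourhood of $(\theta_1,\dots,\theta_\ell)$; a further appeal to continuity of $S_\ell$ yields
\[
\inf_{v\in\cA} P_\ell(v,B_H(\hat u,\varepsilon)) \ge p(\varepsilon) > 0.
\]

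The main obstacle is (iii), the uniform Feller property: for every $V\in C(\cA)$ (it suffices to treat a dense subclass) the normalised family $\{\mathfrak P_k^V f / \|\mathfrak P_k^V\mathbf 1\|_\infty : k\ge 0\}$ must be equicontinuous on $\cA$ for $f\in C^1(\cA)$. I would adapt the coupling construction of \cite{shirikyan-asens2015}: on an enlarged probability space, build two solutions $(u_k)$ and $(u'_k)$ from nearby initial data $u,u'\in\cA$ whose high-frequency components are synchronised by a Girsanov shift of the noise in the finite-dimensional space $\mathcal E_m$---made possible by the non-degeneracy \textbf{(H3)} and the $C^1$-smoothness of the densities $\rho_j$---while their low-frequency components contract thanks to dissipativity. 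The new ingredient compared with the ergodicity argument is the multiplicative weight $\exp(\sum_j V(u_j))$, which amplifies differences along the coupled trajectories. I would absorb this via a Gronwall-type perturbation argument on the logarithm of $\mathfrak P_k^V\mathbf 1$, using only the modulus of continuity of $V$ on the compact $\cA$ together with the exponential contraction of the coupling to bound
\[
\bigl|\log\mathfrak P_k^V f(u)-\log\mathfrak P_k^V f(u')\bigr|
\]
uniformly in $k$.

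Once (i)--(iii) are established, the abstract result of \cite{JNPS-cpam2015} directly delivers the existence of the limit $Q(V)$, its independence of~$u_0$, the Legendre-type formula for the good rate function~$I$, and both the upper and lower LDP bounds. The crux of the argument is the space--time localisation of the noise: the coupling/Girsanov estimate underlying uniform Feller must operate only through the finite-dimensional shifts permitted by \textbf{(H3)}, and it is precisely this constraint that motivates the slight strengthening of the hypotheses of \cite{shirikyan-asens2015}.
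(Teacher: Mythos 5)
Your high-level plan coincides with the paper's: compactness and invariance of $\cA$, followed by the abstract LDP criterion of \cite{JNPS-cpam2015} (Proposition~\ref{px-5}), reduced to uniform irreducibility and the uniform Feller property, with the latter proved via Shirikyan's coupling. The proposal is on the right track but has two genuine gaps.

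First, your uniform irreducibility argument proves only half of what the abstract criterion needs. You establish
\[
\inf_{v\in\cA} P_\ell\bigl(v,B_H(\hat u,\varepsilon)\bigr) \ge p(\varepsilon) > 0,
\]
i.e.\ irreducibility towards the single reference point $\hat u$. But Proposition~\ref{px-5} requires $P_m(x,B(a,r))\ge p$ for \emph{all} $x,a\in\cA$ simultaneously. The missing ingredient is the return leg: from a small neighbourhood of $\hat u$ one must reach a neighbourhood of an \emph{arbitrary} $a\in\cA$ with probability bounded below uniformly in $a$. In the paper this is Lemma~\ref{x-2}, and it relies on the structure of $\cA$ as the closure of the sets $\cA_k$ attainable from $\hat u$: by Lemma~\ref{1-1} every $a\in\cA$ is $r/2$-close to some $a_k\in\cA_k$ with $k$ chosen independently of $a$ (compactness of $\cA$), and Lemma~\ref{x-1} gives a lower bound on the probability that $(\eta_1,\dots,\eta_k)$ lies near the corresponding controls $(\theta_1^0,\dots,\theta_k^0)$, again uniformly because $\mathcal K$ is compact. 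Composing the two legs via the Chapman--Kolmogorov equation gives the required two-sided irreducibility. Without this step your verification of (ii) is incomplete.

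Second, your uniform Feller argument is too sketchy to assess and, as stated, probably does not work. You propose a ``Gronwall-type perturbation argument on the logarithm of $\mathfrak P_k^V\mathbf 1$'' to bound $|\log\mathfrak P_k^V f(u)-\log\mathfrak P_k^V f(u')|$ uniformly in $k$. That is a much stronger conclusion than equicontinuity of the normalised family and no mechanism is given by which the modulus of continuity of $V$ alone would close a Gronwall iteration against the exponential multiplicative weight. The paper instead runs a \emph{stratification} argument (Theorem~\ref{2-3}): partition the probability space by the coupling events $A_k$ (the coupled trajectories contract for exactly $k$ steps and then fail, or contract for all $n$ steps), use Proposition~\ref{1-3} to get $\mP(A_k)\lesssim q^k\|u_0-u_0'\|$, and estimate the contribution of each stratum by conditioning at time $k+1$ to re-insert the factor $\|\mathcal B_n^V\mathbf 1\|_\infty$, which produces the geometric series $\sum_k q^k e^{(k+1)\|V\|_\infty}$; choosing $q$ small enough relative to $\|V\|_\infty$ makes the series summable. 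The final stratum $A_n$ is handled by a telescoping expansion of the product of the exponentials. This is the step that genuinely requires work and is the heart of the proof; you have replaced it with a suggestion that is not clearly valid. Also, a minor point: the coupling is not a Girsanov change of measure since the kicks are non-Gaussian; it is a total-variation estimate for the pushforward of the kick law under a finite-dimensional translation $\Psi$ (using the $C^1$ densities from \textbf{(H1)} and the non-degeneracy from \textbf{(H3)}), via \cite[Propositions 5.2 and 5.6]{shirikyan-asens2015}.
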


\begin{remark}
The sequence  $\{\zeta_k\}$  also  satisfies an LDP on $H$ under the conditions of Theorem  \ref{1-7}.
  More precisely, there is a lower-semicontinuous  mapping $I:\PP(H)\rightarrow [0,+\ty]$
  such that
  \begin{align*}
  \begin{split}
    -\inf_{\lambda \in \dot{\Gamma}} I(\lambda) & \leq \liminf_{k\rightarrow \infty}
    \frac{1}{k}\log \mP(\zeta_k\in \Gamma)
    \\ &\leq
    \limsup_{k\rightarrow \infty} \frac{1}{k}\log \mP(\zeta_k\in \Gamma)
    \leq -\inf_{\lambda \in \overline{\Gamma}} I(\lambda),
    \end{split}
  \end{align*}
  where  $\Gamma \subset \PP(H)$
  is an arbitrary Borel subset and $\dot{\Gamma}$ and $\overline{\Gamma}$
  denote its interior and closure, respectively.
\end{remark}

\begin{remark}
Note that  \cite{shirikyan-asens2015} assumes that the random force has the form $f(t,x)=h(t,x)+\eta(t,x)$ with $h(t,x)$ being some periodic function in $H^1(Q,\mR^2)$. Because $h(t,x)$ will not play an essential role in proving large deviations, without loss of generality we assume $h(t,x) \equiv 0$ in this paper.
\end{remark}

\section{Proof of  Theorem \ref{1-7}} \label{S:3}
\label{j-7}
First, let us show  the compactness of $\cA$ in $H$ which   plays an  important role  in our proof.
By  \cite[Proposition 2.1.21]{KS-book}, one has
\begin{eqnarray*}
  \|S(u,\eta_k)\|\leq \kappa \|u\|+C_1 \|\eta_k\|_{L^2(Q)},
\end{eqnarray*}
where $\kappa<1,C_1>0$ are some universal constants.
Thanks to hypothesis \textbf{(H1)}, we can choose
$r>0$  big enough  so that
\begin{eqnarray*}
\|\eta_k\|_{L^2(Q)}\leq r
\end{eqnarray*}
 with probability 1.
Then, with probability 1, one has
\begin{eqnarray*}
  \|S(u,\eta_k)\|\leq \kappa \rho +C_1 r, ~\forall u \in B_H(\rho).
\end{eqnarray*}
It follows that if $R\geq \frac{C_1r}{1-\kappa},$
then the ball $B_H(R)$ is invariant for the Markov chain $(u_k,\mP_u).$
Let us take $R$ big enough such that
\begin{align*}
R>\frac{C_1r}{1-\kappa}
\end{align*}
and
$\hat{u}\in B_H(R).$
Let  $X$  be the image of the set $B_H(R)\times B_{L^2(Q)}(r)$ under the mapping $S,$
then, {for any $u\in X $ and $\eta_k\in B_{L^2(Q)}(r)$, $S(u,\eta_k)\in X.$ }
By the compact embedding theorem(see \cite[Section 5.7]{Eva10} for example) and
\cite[(2.52)]{KS-book},
$X$  also is  a compact set  in $H$.
{Therefore, we obtain the compactness of $\cA$  from the
  fact  that  $\cA \subseteq X$ is a close set.}

{
We endow  the set   $\cA$ with distance $d(u,v)=\|u-v\|,$  then $\cA$ is a compact metric space.
Theorem \ref{1-7} immediately follows from Proposition \ref{px-5}, as long as we verify uniform irreducibility and uniform Feller property which are given in Section \ref{subsection3:1} and Section \ref{subsection3:2} respectively.
Hence, we complete the  proof of Theorem  \ref{1-7}.
}

\begin{proposition}
\label{px-5}
  Let $\{u_k,k\geq 1\}$ be a homogeneous family of Markov chains on a compact metric space $(\cA,d(\cdot,\cdot))$  with transition  function $P_k(u,\cdot).$
 The family  $\{\zeta_k,k\geq 1\}$ of random measures on $\cA$  is defined by (\ref{px-1}).
 Assume   the following two conditions hold:

  \noindent  \textbf{Uniform Irreducibility.} For any $r>0,$
    there is an integer $m\geq 1$ and a constant $p>0$ such that
    \begin{eqnarray}
    \label{h-2}
      P_m(x,B(a,r))\geq p,\quad \forall x,a \in \cA,
    \end{eqnarray}
where $B(a,r)=\{y\in \cA: d(y,a)\leq r\}.$

 \noindent  \textbf{Uniform Feller property:}
 For any $V,f\in C^1(\cA)$, the sequence $\{\|\mathcal{B}_n^V\textbf{1}\|_{\infty}^{-1}\mathcal{B}_n^Vf\}$ is uniformly equicontinuous  on $\cA$\footnote{The  uniformly equicontinuous of $\{\|\mathcal{B}_n^V\textbf{1}\|_{\infty}^{-1}\mathcal{B}_n^Vf\}$  on $\cA$ means that for any $\eps>0$, there is a $\delta>0$ such that $\big| \|\mathcal{B}_n^V\textbf{1}\|_{\infty}^{-1}\mathcal{B}_n^Vf(u_1) -\|\mathcal{B}_n^V\textbf{1}\|_{\infty}^{-1}\mathcal{B}_n^Vf(u_2)\big|\leq \eps$
 for any $u_1,u_2\in\cA $ with  $d(u_1,u_2)\leq \delta$.}.
Here,
for any $u_0\in \cA $ and $V,f\in C(\cA), $
$\mathcal{B}_n^Vf(u_0)$ and $\|\mathcal{B}_n^V f\|_{\infty} $ are defined by
 $$
 \mathcal{B}_n^Vf(u_0)= \mE_{u_0}\big[\exp\{\sum_{i=1}^nV(u_i)\}f(u_n)\big]\quad \text{ and }\quad
 \|\mathcal{B}_n^V f\|_{\infty}= \sup_{u_0\in \cA }|\mathcal{B}_n^Vf(u_0)|,
$$
 respectively.
 In the above, the subscript $u$  in $\mE_u$ is the initial state of
  the Markov chain $\{u_k\}$. We call $\mathcal{B}_n^V$  the Feynman-Kac semigroup with respect to $V$.

  Then the family  $\{\zeta_k,k\geq 1\}$  satisfies the LDP with a good rate function $I$ defined by
   \begin{eqnarray}
   \label{px-2}
     I(\sigma)=\sup_{V\in C(\cA)} \big(\langle V,\sigma\rangle-Q(V) \big),
     \quad \sigma \in \PP(\cA),
   \end{eqnarray}
   where $Q(V)$ is defined as
   \begin{eqnarray*}
     Q(V)=\lim_{k\rightarrow \infty}\frac{1}{k}\log \mE \exp{(\sum_{j=1}^kV(u_j))},
   \end{eqnarray*}
  this limits  exists for any $V\in C(\cA)$ and does not depend on the initial point $u_0.$
\end{proposition}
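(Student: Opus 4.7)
The plan is to invoke the abstract LDP criterion of \cite{JNPS-cpam2015}, whose hypotheses (compact state space, uniform irreducibility, uniform Feller for the Feynman-Kac semigroup) match ours verbatim, so at the formal level the statement is immediate. Below I sketch how that proof proceeds so as to make clear what is being used.

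First I would carry out a Perron-Frobenius analysis of the non-conservative semigroup $\mathcal{B}_n^V$ for fixed $V\in C^1(\cA)$. The uniform Feller hypothesis implies that the normalised family $\{\|\mathcal{B}_n^V\mek\|_\infty^{-1}\mathcal{B}_n^V f\}_n$ is equicontinuous on the compact set $\cA$, hence relatively compact in $C(\cA)$ by Arzel\`a-Ascoli. Uniform irreducibility provides a Doeblin-type lower bound on $\mathcal{B}_m^V\mek$ that forces every limit point to be strictly positive, which in turn allows a Birkhoff contraction argument in the Hilbert projective cone. This produces a principal eigenvalue $\lambda_V>0$, a strictly positive continuous eigenfunction $h_V\in C(\cA)$, and an eigenmeasure $\mu_V\in\PP(\cA)$ satisfying
\begin{align*}
\mathcal{B}_1^V h_V = \lambda_V\, h_V, \qquad (\mathcal{B}_1^V)^{\ast}\mu_V = \lambda_V\, \mu_V,
\end{align*}
together with the uniform convergence $\lambda_V^{-n}\mathcal{B}_n^V f \to h_V\,\langle f,\mu_V\rangle$ on $\cA$ for every $f\in C(\cA)$. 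Taking $f=\mek$ yields existence of $Q(V)=\log\lambda_V$ independent of the initial state.

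Next I would extend $Q$ from $C^1(\cA)$ to $C(\cA)$ by uniform approximation, using the elementary bound $|Q(V_1)-Q(V_2)|\le\|V_1-V_2\|_\infty$, and note that $Q$ is convex by H\"older's inequality. Compactness of $\PP(\cA)$ (inherited from compactness of $\cA$) then automatically makes the Legendre transform $I$ in (\ref{px-2}) a good rate function. The upper bound on closed sets follows directly from Chebyshev's inequality combined with the uniform convergence $\tfrac{1}{n}\log\|\mathcal{B}_n^V\mek\|_\infty\to Q(V)$. For the lower bound on open sets one invokes Kifer's lemma: for each $V$ the tilted probability measure with density proportional to $\exp(\sum_{j=1}^k V(u_j))$ has $h_V\mu_V$ as its equilibrium state, and uniform irreducibility upgrades Step 1 to a uniform law of large numbers for the tilted chain, concentrating $\zeta_k$ near $h_V\mu_V$ at the exponential rate $I(h_V\mu_V)=\langle V,h_V\mu_V\rangle-Q(V)$; the equilibrium states obtained in this way form a dense class, and Kifer's lemma glues these local lower bounds into a global one.

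The hard part is Step 1: turning the soft inputs (uniform Feller plus uniform irreducibility) into quantitative spectral information for a non-self-adjoint, non-conservative semigroup on the infinite-dimensional space $\cA\subset H$. This is the technical core of \cite{JNPS-cpam2015} and cannot really be shortcut; by contrast, the convexity and regularity of $Q$, the goodness of $I$, and Kifer's lemma are essentially soft consequences of compactness once Step 1 is in hand.
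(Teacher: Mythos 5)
Your proposal follows the same route as the paper: both reduce the statement to the abstract LDP criterion of Kifer (via Theorem~A.1 of \cite{JNPS-cpam2015}), establish the existence of $Q(V)$ and the unique equilibrium state for $V$ in the dense class $C^1(\cA)$ through a Perron--Frobenius analysis of $\mathcal{B}_n^V$ driven by uniform Feller and uniform irreducibility (the paper cites this as Theorem~2.1 of \cite{JNPS-cpam2015}), and then conclude by the argument on pages~2134--2135 of that reference. Your sketch adds a few interior details (Arzel\`a--Ascoli, Birkhoff cone contraction, the Lipschitz bound $|Q(V_1)-Q(V_2)|\le\|V_1-V_2\|_\infty$) that the paper leaves implicit by citation, but the decomposition and key lemmas are identical.
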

\begin{proof}
The arguments below is essentially from \cite{JNPS-cpam2015}, we only sketch its main ingredients.

 By the abstract result established by  \cite{kifer-1990}(see also \cite[Theorem A.1]{JNPS-cpam2015}), we only need to prove the following two properties.

\emph{Property 1.} For any $V\in C(\cA),$ the limit
\begin{eqnarray*}
  Q(V)=\lim_{k\rightarrow \infty} \frac{1}{k}\log \mE \exp\{\sum_{n=1}^kV(u_n)\}
\end{eqnarray*}
exists and does not depend on the initial condition $u_0.$

\emph{Property 2.} There is a dense vector space $\cV\subseteq C(\cA)$
such that there exists unique $\sigma_V \in  \PP(\cA)$ satisfying
\begin{eqnarray*}
  Q(V)=\langle V,\sigma_V\rangle-I(\sigma_V),
\end{eqnarray*}
where the $I$ is defined by (\ref{px-2}).

For any   $V\in \cV:=C^1(\cA),$   the generalized Markov kernal $P^V(u,\cdot)$ is given  by
\begin{eqnarray*}
  P^V(u,\Gamma)=\mE_u \big(I_{\Gamma}(u_1)e^{V(u_1)}\big), \quad u \in \mathcal{A},
  \Gamma\in \mathcal{B}(\mathcal{A}),
\end{eqnarray*} 
where $I_{\Gamma}(\cdot)$ is  the indicator function of  set $\Gamma.$
We define  $P_k^V(u,\Gamma)$ by the relations $P_0^V(u,\cdot)=\delta_u, P_1^V(u,\Gamma)=P^V(u,\Gamma)$ and
\begin{eqnarray*}
  P_k^V(u,\cdot):=\int_{\mathcal{A}}P_{k-1}^V(u,dv)P^V(v,\cdot),\quad  k\geq 2.
\end{eqnarray*}
 The  operators $\mathcal{B}_k^{V}: C(\cA)\rightarrow C(\cA)$
 and  $\mathcal{B}_k^{V*}:\mathcal{M}_{+}(\cA)\rightarrow \mathcal{M}_{+}(\cA)$
  are respectively defined by
 \begin{eqnarray*}
\mathcal{B}_k^{V}f(u)=\int_{\mathcal{A}} P_k^V(u,dv)f(v),
\quad
\mathcal{B}_k^{V*}\mu(\Gamma)=\int_{\mathcal{A}} P_k^V(u,\Gamma)\mu(du),
 \end{eqnarray*}
 where  $f\in C(\cA)$, $\mu\in \mathcal{M}_{+}(\cA)$, $\Gamma\in \BBBBB(\cA)$
 and $\mathcal{M}_{+}(\cA)$ is the space of non-negative Borel measures on $\cA$ endowed with
 the topology of weak convergence.
By (\ref{h-2}) and   the  following inequality
\begin{eqnarray*}
  P_k^V(u,\cdot)\geq e^{-k \|V\|_\infty}P_k(u,\cdot) \quad \text{ for any } u\in \cA,
\end{eqnarray*}
one finds   that  for some $C=C(k,V)>0,$
\begin{eqnarray*}
 C^{-1}\leq P_k^V(u,\cA)\leq C,\quad \text{ for all } u\in \cA.
\end{eqnarray*}
By this inequality and the  uniform Feller property, and using   \cite[Theorem 2.1]{JNPS-cpam2015} for $\mathcal{B}_k^V,$ $\mathcal{B}_k^{V*}$ and $P_k^V(u,\cdot)$,
  there are a number $\lambda_V>0,$ a strictly positive function $h_V\in C(\cA),$ and a measure $\mu_V\in \mathcal{P}(\cA)$ satisfying
\begin{eqnarray*}
  \mathcal{B}^V_1h_V=\lambda_Vh_V, \quad {\mathcal{B}_1^V}^*\mu_V=\lambda_V\mu_V,
  \quad \langle h_V, \mu_V \rangle =1,
\end{eqnarray*}
such that for any $f\in C(\cA)$ and $\nu \in \mathcal{M}_{+}(\cA)$ we have
\begin{eqnarray}
 \label{px-3} &&  \lambda_V^{-k}\mathcal{B}_k^Vf\rightarrow \langle f,\mu_V\rangle h_V \text{ in } C(\cA) \text{ as } k\rightarrow +\infty,
  \\ \label{px-4}
    && \lambda_V^{-k}{\mathcal{B}_k^V}^*\nu \rightharpoonup
    \langle h_V,\nu \rangle  \mu_V   \text{ in } \mathcal{M}_{+}(\cA) \text{ as } k\rightarrow +\infty.
\end{eqnarray}
Following the argument  in  \cite[Pages 2134--2135]{JNPS-cpam2015},
we can obtain the   Property  1 and Property 2  from  (\ref{px-3}) and (\ref{px-4}).
The proof is complete.
\end{proof}


\subsection{Proof of  Uniform Irreducibility }
\label{subsection3:1}
The proof of  uniform irreducibility is inspired by
the method in \cite{Kuksin_Shirikyan_2000_CMaPh}.
\begin{lemma}
\label{x-1}
  For any $\rho>0$ and integer $M\geq 1$ there is $p_0=p_0(\rho,M)>0$
  such that
  \begin{eqnarray*}
    \mP\big(\|\eta_j-x_j\|_{L^2(Q)}<\rho, 1\leq j\leq M\big)\geq p_0
  \end{eqnarray*}
uniformly for $x_1,\cdots,x_M\in  \KK.$
\end{lemma}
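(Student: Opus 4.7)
The plan is to reduce the joint probability to a product of single-step probabilities via independence, and then show that the single-step hitting probability is uniformly bounded from below over the compact set $\mathcal{K}$.

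\textbf{Step 1: Reduction via independence.} Since the random variables $\eta_1,\eta_2,\ldots$ are i.i.d.\ in $L^2([0,1]\times D)$, for any fixed deterministic $x_1,\ldots,x_M\in\mathcal{K}$ the events $\{\|\eta_j-x_j\|_{L^2(Q)}<\rho\}$ are independent, hence
\[
\mP\bigl(\|\eta_j-x_j\|_{L^2(Q)}<\rho,\ 1\le j\le M\bigr)=\prod_{j=1}^{M}\mP\bigl(\|\eta_1-x_j\|_{L^2(Q)}<\rho\bigr).
\]
So it suffices to establish a single-step bound: there exists $p_1=p_1(\rho)>0$ with $\mP(\|\eta_1-x\|_{L^2(Q)}<\rho)\ge p_1$ for every $x\in\mathcal{K}$; then $p_0:=p_1^M$ does the job.

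\textbf{Step 2: A finite covering of $\mathcal{K}$.} By hypothesis \textbf{(H1)}, $\mathcal{K}$ is compact in $H_0^1(Q,\mR^2)$, hence a fortiori compact in $L^2(Q,\mR^2)$. Cover $\mathcal{K}$ by finitely many $L^2$-open balls of radius $\rho/2$ centred at points $x^{(1)},\ldots,x^{(N)}\in\mathcal{K}$. Because each $x^{(i)}$ belongs to the support of the law of $\eta_1$ (that is the very definition of $\mathcal{K}$), and the ball $\{y\in L^2(Q,\mR^2):\|y-x^{(i)}\|_{L^2(Q)}<\rho/2\}$ is an open neighbourhood of $x^{(i)}$ in $L^2(Q,\mR^2)$, we have
\[
q_i:=\mP\bigl(\|\eta_1-x^{(i)}\|_{L^2(Q)}<\rho/2\bigr)>0,\qquad i=1,\ldots,N.
\]
Set $p_1:=\min_{1\le i\le N}q_i>0$.

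\textbf{Step 3: Uniform single-step bound via triangle inequality.} For an arbitrary $x\in\mathcal{K}$, pick $i$ with $\|x-x^{(i)}\|_{L^2(Q)}<\rho/2$. The triangle inequality gives the inclusion of events
\[
\bigl\{\|\eta_1-x^{(i)}\|_{L^2(Q)}<\rho/2\bigr\}\subseteq\bigl\{\|\eta_1-x\|_{L^2(Q)}<\rho\bigr\},
\]
so $\mP(\|\eta_1-x\|_{L^2(Q)}<\rho)\ge q_i\ge p_1$. Combined with Step 1, this proves the lemma with $p_0=p_1^M$.

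The argument is essentially soft; the only non-trivial input is the compactness of $\mathcal{K}$ in $L^2(Q,\mR^2)$, which is immediate from \textbf{(H1)}, and the fact that every open neighbourhood of a point of the support has positive probability. No use of the nondegeneracy condition \textbf{(H3)} or of the densities $\rho_j$ is needed for this particular estimate; those hypotheses will matter only further on, when one must translate the localisation probability for the noise into a localisation probability for the Markov chain in order to verify uniform irreducibility.
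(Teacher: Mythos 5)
Your proof is correct and follows essentially the same route as the paper's: factorize by independence, cover the compact set $\mathcal{K}$ by finitely many $L^2$-balls of radius $\rho/2$, use the definition of support to get a positive lower bound at each centre, and transfer it to an arbitrary $x\in\mathcal{K}$ by the triangle inequality. The only difference is presentational; the paper states the covering and the minimum first and then factorizes, but the content is identical.
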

\begin{proof}
Recall that $\KK$ is a compact subset of $H_0^1(Q,\mR^2)$ and thus a compact subset of $L^2(Q)$,
there exist $n_0$ elements $y_1,...,y_{n_0} \in \KK$ such that $B_{H}(y_1, \rho/2),...,B_{H}(y_{n_0},\rho/2)$ cover $\KK$.
Since $\{\eta_j\}$ are i.i.d random variables with support in $\KK$, we know
 \begin{eqnarray*}
    \mP\big(\|\eta_j-x_j\|_{L^2(Q)}<\rho, 1\leq j\leq M\big)&=& \prod_{j=1}^M \mP\big(\|\eta_j-x_j\|_{L^2(Q)}<\rho\big)
  \end{eqnarray*}
 {As} $\KK$ is the support of $\eta_1$, we know $\tilde p_0:= \min_{1 \le j \le n_0} \mP\big(\|\eta_1-y_j\|_{L^2(Q)}<\rho/2\big)>0$. It is easy to see that for all $j$
$\mP\big(\|\eta_j-x_j\|_{L^2(Q)}<\rho\big) \ge \mP\big(\|\eta_j-y_{n(j)}\|_{L^2(Q)}<\rho/2\big) \ge \tilde p_0$ where $y_{n(j)}$ is the point in $\{y_1,...,y_{n_0}\}$ such that $x_j \in B_{H}(y_{n(j)},\rho/2)$. Taking $p_0=\tilde p_0^M$, we immediately conclude the proof.
\end{proof}
\begin{lemma}
\label{1-1}
  For any $r>0$ there is an integer  $k\geq 0$ such that $\cA$
  is contained in the $r$-neighbourhood of $\cA_k$, i.e.,
  for any $a\in \cA$ there exists $a_k\in \cA_k$ such that $a_k\in B_H(a,r).$
\end{lemma}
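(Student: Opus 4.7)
The plan is to combine the compactness of $\cA$, the density of $\bigcup_{k\ge 0}\cA_k$ in $\cA$, the continuity of the iterated Navier-Stokes map in its initial datum, and the uniform approximate controllability \textbf{(H2)} to produce, for any $r>0$, a single integer $K$ for which $\cA_K$ is $r$-dense in $\cA$. The strategy is to reduce to finitely many target points via compactness, synchronise the ``arrival times'' by padding each short path with a uniform return time to a neighbourhood of $\hat u$, and then close the loop with a continuity estimate at $\hat u$.

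First, using compactness of $\cA$ together with the density of $\bigcup_{k}\cA_k$ in $\cA$, I would extract a finite cover: balls $B_H(a_j,r/3)$, $j=1,\ldots,n$, and points $b_j\in\cA_{k_j}$ with $\|b_j-a_j\|<r/3$. For each $j$, I would fix controls $\theta^{(j)}=(\theta_1^{(j)},\ldots,\theta_{k_j}^{(j)})\in\mathcal{K}^{k_j}$ realising $b_j=S_{k_j}(\hat u,\theta^{(j)})$. The continuous dependence of the 2D Navier-Stokes flow on the initial datum in $H$ (from the well-posedness theory of \cite{KS-book}) then supplies $\delta_j>0$ such that $\|v-\hat u\|<\delta_j$ implies $\|S_{k_j}(v,\theta^{(j)})-b_j\|<r/3$; set $\delta:=\min_j\delta_j>0$.

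Next, using that $\cA\subset B_H(R_0)$ for some $R_0$ (which comes from the compactness argument given just before the lemma), I would apply \textbf{(H2)} with radius $R_0$ and tolerance $\delta$ to obtain a single integer $\ell_*$ such that from every $v\in B_H(R_0)$ there exist $\ell_*$ controls in $\mathcal{K}$ steering to within $\delta$ of $\hat u$. Since every $\cA_m$ is non-empty and contained in $B_H(R_0)$, concatenating such a return segment after a trajectory of length $m$ gives
\[
\cA_{m+\ell_*}\cap B_H(\hat u,\delta)\neq\emptyset\qquad\text{for every }m\ge 0,
\]
so $\cA_n\cap B_H(\hat u,\delta)\neq\emptyset$ for every $n\ge\ell_*$.

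With this in hand I would set $K:=\max_{j}k_j+\ell_*$. For each $j$, $K-k_j\ge\ell_*$, so there exists $v_j\in\cA_{K-k_j}$ with $\|v_j-\hat u\|<\delta$; then $c_j:=S_{k_j}(v_j,\theta^{(j)})\in\cA_K$ and $\|c_j-b_j\|<r/3$ by the choice of $\delta$. An arbitrary $a\in\cA$ lies in some $B_H(a_j,r/3)$, and the triangle inequality gives $\|a-c_j\|<r$, proving the required $r$-density of $\cA_K$ in $\cA$. The step I expect to be the most delicate is the continuity one, which requires the iterated map $S_{k_j}(\cdot,\theta^{(j)})$ to depend continuously on the initial datum in the $H$-topology for the particular, only-$L^2\cap H^1_0$ regular controls $\theta^{(j)}\in\mathcal{K}$; this is standard for the 2D Navier-Stokes system with $L^2$-forcing but should be invoked explicitly, and it is the one ingredient above that is not purely set-theoretic.
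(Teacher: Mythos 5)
Your proof is correct and takes a genuinely different route from the paper's. The paper's argument is much shorter: it first claims that the sets $\cA_j$ are nested, $\cA_j\subset\cA_{j+1}$, appealing to $0\in\KK$; then the open $r$-neighbourhoods $O_j$ of $\cA_j$ form an \emph{increasing} open cover of the compact set $\cA$, and a finite subcover collapses to a single $O_k$. You dispense with the monotonicity of the $\cA_j$ entirely: you reduce to finitely many targets $b_j\in\cA_{k_j}$ by compactness, then ``pad'' each controlled trajectory with a uniform return segment to a $\delta$-neighbourhood of $\hat u$ (uniform in the starting point thanks to \textbf{(H2)} applied on the bounded set $B_H(R_0)\supset\cA$) so that every target is approximately reached at the common time $K=\max_j k_j+\ell_*$, with the error controlled by continuous dependence of $v\mapsto S_{k_j}(v,\theta^{(j)})$ on the initial datum in $H$. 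This costs more machinery (the uniformity of \textbf{(H2)} over a ball and the continuity of the iterated flow map, both of which you rightly flag as the non-trivial ingredients), but it is arguably more robust: the paper's justification that $\cA_j\subset\cA_{j+1}$ because $0\in\KK$ and ``$u$ itself is a (constant) solution'' is imprecise, since for the dissipative system with zero forcing one has $S(u,0)\neq u$ for $u\neq 0$, so $u\in\cA_{j+1}$ does not follow immediately; your argument sidesteps this issue altogether.
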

\begin{proof}
We claim that $\cA_{j+1} \supset \cA_j$ for all $j$. To see this, for any $u \in \cA_j$, we know $u \in \cA_{j+1}$ since $0 \in \KK$ and thus $u$ itself is a (constant) solution to the corresponding system \eqref{3-1}.

  Since $\cA$ is the closure of $\cup_{j=0}^\infty \cA_j$, for any $r>0$ we have
   \begin{eqnarray*}
     \cA\subset  \cup_{j=0}^\infty O_j,
   \end{eqnarray*}
   where $O_j$ is the open $r$-neighbourhood of $\cA_j$ in $H.$
   Since $\{\cA_j\}$ are an increasing sequence, so are $\{O_j\}$.  By the compactness of $\cA$, we complete the proof.

\end{proof}
\begin{lemma}
\label{x-2}
  For any  $r>0$ there are positive constants  $\eps, \delta$ and an integer $k>0$ such that
  \begin{eqnarray*}
    P_{k}(u,B_H(a,r))\geq \eps, \quad \forall u\in B_H(\hat{u},\delta) \text{ and } a\in \cA,
  \end{eqnarray*}
  where $\hat u$ is defined in {\bf (H2)} and $\{P_{k}\}$ is the transition probability family of the Markov chain $\{u_{k}\}$.
\end{lemma}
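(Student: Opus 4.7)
The plan is to combine Lemmas \ref{1-1} and \ref{x-1} with a uniform Lipschitz estimate for the iterated solution map $S_k$. First I would apply Lemma \ref{1-1} to fix an integer $k\ge 1$ such that $\cA$ is contained in the $r/2$-neighbourhood of $\cA_k$. Then for every $a\in\cA$ I pick $a'\in\cA_k$ with $\|a-a'\|\le r/2$ together with controls $\theta_1^a,\ldots,\theta_k^a\in\KK$ satisfying $a'=S_k(\hat u,\theta_1^a,\ldots,\theta_k^a)$. The integer $k$ depends only on $r$, and although the sequence $(\theta_j^a)$ depends on $a$, it always lies in the compact set $\KK^k$.

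Next, using \cite[Proposition 2.1.21]{KS-book} together with a standard energy estimate for the difference of two solutions of the 2D Navier-Stokes system on $[0,1]$, I would establish a local Lipschitz bound
$$
\|S(u,\eta)-S(v,\theta)\|\le L_1\|u-v\|+L_2\|\eta-\theta\|_{L^2(Q)},
$$
valid on any bounded subset of $H\times L^2(Q)$, with constants $L_1,L_2$ depending only on the bounds on $u,v,\eta,\theta$. Combining this with the fact that $B_H(R)$ is forward-invariant under the dynamics driven by noises in $\KK$, iteration yields a constant $C_k>0$ such that
$$
\|S_k(u,\eta_1,\ldots,\eta_k)-S_k(v,\theta_1,\ldots,\theta_k)\|\le C_k\Bigl(\|u-v\|+\sum_{j=1}^k\|\eta_j-\theta_j\|_{L^2(Q)}\Bigr)
$$
uniformly in $u,v\in B_H(\hat u,1)$ and in $(\eta_j),(\theta_j)\in\KK^k$.

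Finally, I would choose $\delta\in(0,1)$ and $\rho>0$ so small that $C_k\delta+kC_k\rho<r/2$. Applying the above bound with $v=\hat u$ and $\theta_j=\theta_j^a$ shows that for every $u\in B_H(\hat u,\delta)$ and every realisation of the noise satisfying $\|\eta_j-\theta_j^a\|_{L^2(Q)}<\rho$ for all $j\le k$, the iterated solution $S_k(u,\eta_1,\ldots,\eta_k)$ lies in $B_H(a',r/2)\subset B_H(a,r)$. Lemma \ref{x-1} applied with parameters $\rho$ and $M=k$ then gives a positive lower bound $p_0(\rho,k)$ on the probability of this noise event, uniform in $a$, so $P_k(u,B_H(a,r))\ge p_0(\rho,k)=:\eps$ as required. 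The main obstacle is ensuring that the Lipschitz constant $C_k$, and hence $\delta,\rho,\eps$, is independent of $a\in\cA$; this is secured by keeping initial data in a bounded neighbourhood of $\hat u$ and noises in the compact set $\KK$, on which the local Lipschitz bounds for the Navier-Stokes flow are uniform.
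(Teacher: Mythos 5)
Your argument follows the paper's proof: use Lemma \ref{1-1} to fix $k$ so that $\cA$ lies in the $r/2$-neighbourhood of $\cA_k$, pick controls in $\KK$ steering $\hat u$ to some $a'\in\cA_k$ near $a$, then apply Lemma \ref{x-1} to lower-bound, uniformly in $a$, the probability that the kicks $\eta_j$ stay $\rho$-close to those controls. You are in fact more careful than the paper at the continuity step: the paper simply invokes continuity of $S$ to find a small $\delta$, but the required $\delta$ must be chosen independently of $a$ (since the control sequence $(\theta_j^a)$ varies with $a$), and your explicit local Lipschitz bound for $S_k$ on the bounded set $B_H(\hat u,1)\times\KK^k$ makes that uniformity transparent, closing a small gap left implicit in the paper's wording.
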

\begin{proof}
 By  Lemma \ref{1-1},   for any  $r>0$, there is  a   $k=k(r)\geq 1 $  such that  for  any $a\in \cA$, the following  
   \begin{eqnarray*}
   \|a-a_k\|\leq \frac{r}{2}
 \end{eqnarray*}
 holds for some   $a_k\in \cA_k$.
 Since $a_k\in \cA_k,$ there are  $\theta_j^0\in  \KK$ with $j=1,...,k$
 such that
 \begin{eqnarray*}
   S_k(\hat{u},\theta_1^0,\cdots,\theta_k^0 )=a_k,
 \end{eqnarray*}
 where $S_k$ is given in hypothesis (\textbf{H2}).
 Noticing that $S$ is a continuous mapping from  $ H\times L^2(Q)$ to $ H$,
 the following holds
 \begin{eqnarray*}
   S_k(u,\theta_1,\cdots,\theta_k)\in B_H(a_k,\frac{r}{2})
 \end{eqnarray*}
 if
 \begin{eqnarray*}
   \|u-\hat{u}\|<\delta, \quad  \|\theta_j-\theta_j^0\|_{L^2(Q)}<\delta, \quad j=1,\cdots,k,
 \end{eqnarray*}
 for sufficiently small $\delta$.
For any  $u\in B_H(\hat{u},\delta)$,   with the help of
 \begin{eqnarray*}
   P_k(u,B_H(a,r))&\geq& \mP(  S_k(u,\eta_1,\cdots,\eta_k)\in B_H(a_k,\frac{r}{2}))
   \\ &\geq& \mP( \|\eta_j-\theta_j^0\|_{L^2(Q)}<\delta, \quad j=1,\cdots,k)
 \end{eqnarray*}
 { and}  Lemma  \ref{x-1}, we complete our proof immediately.
\end{proof}
\begin{proposition}
\label{7-1}
  The Markov chain $\{u_k,k\geq 0\}$  associated with  (\ref{4-1})  has a unique stationary measure $\mu$.
  Moreover,  the followings  holds:
  \begin{itemize}
    \item[(\romannumeral1)] Uniform Irreducibility  on  $\cA:$  for any $r>0,$
    there is an integer $m\geq 1$ and a constant $p>0$ such that
    \begin{eqnarray*}
      P_m(x,B_H(a,r))\geq p,\quad \forall x,a \in \cA;
    \end{eqnarray*}
    \item[(\romannumeral2)]   $ \cA=\text{supp }\mu.$
  \end{itemize}
\end{proposition}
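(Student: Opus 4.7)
The existence and uniqueness of the stationary measure $\mu$ are already known from \cite{shirikyan-asens2015}, so the task reduces to (i) and (ii). For (i) my plan is a two-leg strategy: the first leg steers any $x \in \cA$ into an arbitrarily small neighborhood $B_H(\hat u, \delta)$ of $\hat u$ with uniform positive probability in a uniform number of steps; the second leg is exactly Lemma~\ref{x-2}, which pushes that ball onto any target $B_H(a,r)$ with $a\in \cA$. Concatenating via Chapman--Kolmogorov and taking $m=\ell+k$, $p=p_* \eps$ then yields (i).

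For the first leg, I would combine approximate controllability \textbf{(H2)} with Lemma~\ref{x-1} and the uniform continuity of $S_\ell$ on a compact set. Since $\cA$ is compact (by the discussion at the start of Section~\ref{j-7}), it lies in some $B_H(R_0)$; \textbf{(H2)} applied to $(R_0,\delta/2)$ yields a single step count $\ell=\ell(R_0,\delta/2)$ such that, for each $v\in \cA$, there exist controls $\theta_1^v,\ldots,\theta_\ell^v\in \KK$ with $\|S_\ell(v,\theta_1^v,\ldots,\theta_\ell^v)-\hat u\|\le \delta/2$. The continuous map $S_\ell:\cA\times \KK^\ell\to H$ is then uniformly continuous, so one can choose $\gamma=\gamma(\delta)>0$ independent of $v$ such that $\|\tilde\theta_j-\theta_j^v\|_{L^2(Q)}<\gamma$ for $j=1,\ldots,\ell$ forces $\|S_\ell(v,\tilde\theta_1,\ldots,\tilde\theta_\ell)-\hat u\|<\delta$. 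Lemma~\ref{x-1} with $M=\ell$, $\rho=\gamma$ then gives
\[
P_\ell(v, B_H(\hat u, \delta)) \ge \mP(\|\eta_j - \theta_j^v\|_{L^2(Q)} < \gamma,\; 1\le j\le \ell) \ge p_0(\gamma,\ell) =: p_*,
\]
uniformly in $v\in \cA$, which is the desired first-leg estimate.

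For (ii), the inclusion $\supp \mu \subseteq \cA$ holds because $\cA$ is closed and forward invariant (for $v\in \cA$, picking $v_n\in \bigcup_j \cA_j$ with $v_n\to v$ and using continuity of $S$ shows $S(v,\eta)\in \cA$ for every $\eta\in \KK$, while $\eta_k\in \KK$ a.s.); together with compactness of $\cA$ and Krylov--Bogolyubov this produces an invariant probability concentrated on $\cA$, which by uniqueness must be $\mu$. For the reverse inclusion, part (i) and stationarity give
\[
\mu(B_H(a,r)) = \int_{\cA} P_m(x, B_H(a,r))\,\mu(dx) \ge p > 0
\]
for every $a\in \cA$ and $r>0$, so $\cA\subseteq \supp\mu$. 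The main obstacle is the uniformity in $v\in \cA$ in the first leg: \textbf{(H2)} delivers controls only pointwise in $v$, and turning this into a uniform positive lower bound on $P_\ell(v,B_H(\hat u,\delta))$ requires exploiting the joint compactness of $\cA\times \KK^\ell$ and the uniform continuity of $S_\ell$ that it entails. Everything else is routine.
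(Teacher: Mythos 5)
Your proposal is correct and follows essentially the same two-leg strategy as the paper: steer into a neighborhood of $\hat u$ (using \textbf{(H2)}, compactness, and Lemma~\ref{x-1}), then apply Lemma~\ref{x-2} and concatenate via Chapman--Kolmogorov; the argument for $\cA=\supp\mu$ via forward invariance, uniqueness, and the irreducibility estimate is likewise the paper's. The only difference is that you spell out in more detail the role of uniform continuity of $S_\ell$ on the compact set $\cA\times\KK^\ell$ in the first leg, which the paper compresses into the phrase ``by \textbf{(H2)} and the compactness of $\cA$.''
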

\begin{proof}
(i) By the approximate controllability hypothesis  \textbf{(H2)}
{ and the compactness of $\mathcal{A}$}, for any
$\delta>0,$ there exist  some $\eps_1>0$ and some integer  $L_1>0$ such that
\begin{eqnarray}
\label{p-1}
  P_{L_1}(x,B_H(\hat{u},\delta))\geq \eps_1,\quad \forall x\in \cA.
\end{eqnarray}
In view of    Lemma \ref{x-2},  for any  $r>0,$ there exist  positive constants  $\eps_2,\delta$ and  integer $L_2>0$ such that
\begin{eqnarray}
\label{p-2}
  P_{L_2}(u,B_H(a,r))\geq \eps_2, \quad \forall a \in \cA, \forall u\in B_H(\hat{u},\delta).
\end{eqnarray}
By (\ref{p-1}) and (\ref{p-2}),  we obtain
\begin{eqnarray}
\nonumber && P_{L_1+L_2}(x,B_H(a,r))=\int_{H}P_{L_1}(x,\dif u)P_{L_2}(u, B_H(a,r))
\\ \nonumber &&\geq P_{L_1}(x,B_{H}(\hat{u},\delta))\inf_{u\in B_{H}(\hat{u},\delta)}P_{L_2}(u, B_H(a,r))
\\ \label{1-9} &&\geq \eps_1\eps_2>0,
\end{eqnarray}
which implies   (\romannumeral1).


(ii) Since  $\cA$ is an invariant subset for (\ref{4-1}), it carries a stationary measure. Since the stationary measure is unique, we must have
 \begin{eqnarray*}
 \text{supp } \mu\subset \cA.
 \end{eqnarray*}
 On the other hand,
by (\ref{1-9}),  for any $a\in \cA,$  one sees that
\begin{eqnarray*}
 \mu( B_H(a,r) ) =\int_{\cA} \mu(\dif x)P_{L_1+L_2}(x,B_H(a,r))
 \geq \eps_1\eps_2>0,
\end{eqnarray*}
which implies  $ \cA\subset \text{supp }\mu.$
The prof of  (\romannumeral2) is complete.

\end{proof}

\subsection{Proof of uniform Feller property}
\label{subsection3:2}
Nersesyan et. al \cite{NPX-2022} proved uniform Feller property by Malliavin calculus for stochastic 2D Navier-Stokes equation driven by
highly degenerate noise, which cannot be applied in our setting because the noise of \eqref{3-1} is not Gaussian type. In contrast, our approach is via a coupling established by Shirikyan \cite{shirikyan-asens2015}, it takes a  role similar to  the Malliavin calculus in \cite{NPX-2022}.

\subsubsection{A coupling by Shirikyan \cite{shirikyan-asens2015}}
For any $\eps>0,$ define a symmetric function $d_\eps:H \times H\rightarrow \mR$ by the relation
\begin{eqnarray*}
  d_\eps (u_1,u_2)=
  \Big\{
  \begin{split}
    & 1, \quad \text{if } \|u_1-u_2\|>\eps,
    \\ & 0, \quad \text{if } \|u_1-u_2\|\leq \eps
  \end{split}
\end{eqnarray*}
and introduce the following function on the space $\mathcal{P}(H)\times \mathcal{P}(H)$:
\begin{eqnarray*}
  K_\eps(\mu_1,\mu_2)=\sup_{f,g}(\langle f,\mu_1\rangle-\langle g,\mu_2\rangle),
\end{eqnarray*}
where the supremum is taken over all functions $f,g\in C(H)$ satisfying
\begin{eqnarray*}
  f(u_1)-g(u_2)\leq d_\eps(u_1,u_2),\forall u_1,u_2\in H.
\end{eqnarray*}

We have  the following proposition, whose proof is very similar to the argument in \cite{shirikyan-asens2015}, but we give its details for the completeness.
\begin{proposition}
\label{1-3}
  For any $q\in (0,1),$ there exist positive  constants  $d=d(q)>0$ and $C=C(d,q)$
  such that for any  two points $u_0,u_0'\in \cA$ satisfying the inequality $\|u_0-u_0'\|\leq d$
  the pair $(P_1(u_0,\cdot),P_1(u_0',\cdot))$ admits a coupling $(V(u_0,u_0'),V'(u_0,u_0'))$ such that
  \begin{eqnarray}
  \label{j-4}
    \mP\big\{ \|V(u_0,u_0')-V'(u_0,u_0')\|>q\|u_0-u_0'\|\big\}\leq C\|u_0-u_0'\|
  \end{eqnarray}
  and  the functions  $V,V': \Omega \times Z\rightarrow H$ are measurable, where
  \begin{eqnarray}
  \label{j-6}
    Z=\{(u_0,u_0')\in \cA \times \cA: \|u_0-u_0'\|\leq d\}.
  \end{eqnarray}
\end{proposition}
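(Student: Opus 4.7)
The plan is to adapt Shirikyan's coupling construction from \cite{shirikyan-asens2015}. The central idea is to modify the noise on the low modes to compensate for the difference between $u_0$ and $u_0'$, using Foias--Prodi dissipation on the high modes and the $C^1$-smoothness of the densities $\rho_j$ on the shifted coordinates.

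First, I would establish a Foias--Prodi type estimate for the time-one map $S$: there exists an integer $N=N(q)$ such that whenever $\eta,\eta'\in L^2([0,1]\times D,\R^2)$ satisfy $\eta'-\eta\in\mathcal{E}_N$ and $\Pi_N\bigl(S(u_0',\eta')-S(u_0,\eta)\bigr)=0$, one has $\|S(u_0',\eta')-S(u_0,\eta)\|\le (q/2)\|u_0-u_0'\|$. This uses the exponential dissipation of the high-mode part of the difference, driven by the spectral gap of the Stokes operator and the standard nonlinear estimates on the bounded invariant ball containing $\cA$.

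Next, I would construct implicitly a shift $\Phi:Z\times L^2\to\mathcal{E}_N$, jointly measurable in its arguments, such that
$$\Pi_N S(u_0',\eta+\Phi(u_0,u_0',\eta))=\Pi_N S(u_0,\eta), \qquad \|\Phi\|_{L^2(Q)}\le C_0\|u_0-u_0'\|.$$
Smoothness of $S$ in the noise variable together with invertibility of the linearization $\Phi\mapsto \Pi_N S(u_0',\eta+\Phi)$ at $\Phi=0$ (ensured for $N$ large enough because $\{\psi_j\}_{j\le N}$ spans a rich enough space and $b_j\ne 0$ by \textbf{(H3)}) makes the implicit function theorem applicable uniformly over the compact set $Z$ in \eqref{j-6} and the compact support $\mathcal{K}$ of the noise; the linear size estimate on $\Phi$ follows from Lipschitz dependence of $S$ on $u_0$. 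Writing $\Phi=\sum_{j=1}^N b_j\delta_j(\eta)\psi_j$, the transformation $\eta\mapsto\eta+\Phi$ acts on the i.i.d.\ coordinates $\xi_{j1}$ by $\xi_{j1}\mapsto \xi_{j1}+\delta_j$; the $C^1$-smoothness of the densities $\rho_j$ yields the total variation bound $\|\mathrm{Law}(\eta)-\mathrm{Law}(\eta+\Phi)\|_{\mathrm{TV}}\le C_1\|\Phi\|_{L^2(Q)}\le C\|u_0-u_0'\|$ by a change of variables argument.

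A measurable maximal coupling then produces $(\tilde\eta,\tilde\eta')$ with both marginals equal to $\mathrm{Law}(\eta)$ and
$$\mP\bigl(\tilde\eta'=\tilde\eta+\Phi(u_0,u_0',\tilde\eta)\bigr)\ge 1-C\|u_0-u_0'\|.$$
Setting $V=S(u_0,\tilde\eta)$ and $V'=S(u_0',\tilde\eta')$ produces the required pair: the marginals are $P_1(u_0,\cdot)$ and $P_1(u_0',\cdot)$, and on the good event the deterministic contraction from the first step applies, giving $\|V-V'\|\le (q/2)\|u_0-u_0'\|<q\|u_0-u_0'\|$. Joint measurability in $(\omega,u_0,u_0')$ follows from a measurable selection in the maximal coupling. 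The main obstacle is the implicit construction of $\Phi$: one needs joint measurability, a bound linear in $\|u_0-u_0'\|$, and invertibility of the linearized low-mode map holding uniformly on the compact set of initial data together with the compact noise support. This is precisely where the nondegeneracy hypothesis \textbf{(H3)} and the spanning structure of the forcing modes from \textbf{(H1)} combine nontrivially; the $C^1$-smoothness of the $\rho_j$ and standard Lipschitz estimates for $S$ take care of the remaining ingredients.
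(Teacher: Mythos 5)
Your proposal follows the same conceptual route as the paper's proof: both rest on Shirikyan's coupling construction from \cite{shirikyan-asens2015}, using a noise shift on finitely many modes to enforce a one-step contraction, then converting the total-variation distance between the shifted and unshifted noise laws into the coupling bound via a maximal coupling. The difference is that the paper treats the compensating shift as a black box --- it cites Theorem~3.1 of \cite{shirikyan-asens2015} to produce a continuous, Lipschitz, infinitely-smooth map $\Phi:B_{\tilde R}\times B_H(\tilde R)\to\cL(H,\mathcal E_m)$ with the contraction property, and then Propositions~5.2, 5.3 and~5.6 of the same paper to bound the relevant Kantorovich-type functional --- whereas you attempt to re-derive that black box from scratch.

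This is where a genuine gap appears. The pivotal claim in your sketch is that the linearized low-mode control map $\Phi\mapsto\Pi_N D_\eta S(u_0',\eta)[\Phi]$, from $\mathcal E_N$ (spanned by the space-time localized $\psi_j=\chi\phi_j$ supported in $Q$) onto $\Pi_N H$ (spanned by global Stokes eigenfunctions), is invertible uniformly over the compact set $Z\times\mathcal K$, and you justify this by ``$\{\psi_j\}_{j\le N}$ spans a rich enough space and $b_j\ne 0$.'' That is not sufficient: the $\psi_j$ being an $L^2(Q)$-basis tells you nothing directly about the range of the linearized solution map evaluated at time one, which involves propagating forcing supported only in $Q\subset[0,1]\times D$ through the linearized Navier--Stokes dynamics. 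Establishing surjectivity of this map is precisely an exact controllability statement for the low-mode projection by space-time localized forcing, and it requires a unique continuation / observability argument for the adjoint linearized equation. This is the central technical content of Shirikyan's paper, not a consequence of nondegeneracy of the $b_j$. Moreover, \textbf{(H3)} is not used in the paper's proof of this proposition at all, which further signals that your invocation of it here is compensating for a missing argument rather than supplying the real one. A secondary issue is your stated Foias--Prodi estimate: requiring only $\Pi_N\bigl(S(u_0',\eta')-S(u_0,\eta)\bigr)=0$ at the terminal time does not by itself yield the contraction of the full $H$-norm; the dissipation of the high modes is driven by controlling the low-mode part of the difference along the trajectory on $[0,1]$, not just at $t=1$, so the construction and the contraction estimate need to be coupled more carefully (again, as is done in Shirikyan's Theorem~3.1).

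In short, the outline is faithful to the right ideas and would succeed if the invertibility and the Foias--Prodi step were established, but as written those two steps are the hard part and are asserted rather than proved. The paper resolves this by citing the relevant results of \cite{shirikyan-asens2015} directly, which is the appropriate level of detail here.
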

\begin{proof}

We  fix $\tilde{R}>0$ so  large that  $\cA\subseteq B_H(\tilde{R}-1)$ and $\|\eta_k\|_{H^1(Q)}\leq \tilde{R}-1$
 with probability 1.

By  \cite[Theorem 3.1]{shirikyan-asens2015}, for  $q\in (0,1),$
there exist  positive constants $C,d$,  integer $m$  and  a continuous   mapping
$$\Phi:B_{\tilde{R}}   \times B_H(\tilde{R})\rightarrow \cL(H,\mathcal{E}_m)$$
\footnote{$\cL(H,\mathcal{E}_m)$ denotes the space of all linear operators from $H$ to
$\mathcal{E}_m$.  The definitions of $B_{\tilde{R}}, B_H(\tilde{R})$ and $\mathcal{E}_m$ are given in Page 4.
}such that  the following properties hold:

\noindent \textbf{Contraction.}
For any $h \in B_{\tilde{R}}$ and $u_0,u_0'\in B_H(\tilde{R})$ with $\|u_0-u_0'\|\leq d$, we have
\begin{eqnarray}
\label{h-1}
  \|S(u_0,h)-S(u_0',h+\Phi(h,u_0)(u_0'-u_0))\|\leq \frac{q\|u_0-u_0'\|}{2}.
\end{eqnarray}

\noindent \textbf{Regularity.}
The mapping $\Phi$ is  infinitely smooth in the Fr\'echet sense.

\noindent \textbf{Lipschitz continuity.}
The mapping $\Phi$ is Lipchitz continuous with the constant $C$, i.e.,
\begin{eqnarray*}
  \|\Phi(h_1,\hat{u}_1)-\Phi(h_2,\hat{u}_2)\|_{\cL}\leq C(\|h_1-h_2\|_{H^1(Q)}+\|\hat{u}_1-\hat{u}_2\|),
\end{eqnarray*}
where  $\|\cdot\|_{\cL} $ stands for the norm in the space $\cL(H,\mathcal{E}_m).$

By \cite[Proposition 5.3]{shirikyan-asens2015},   there exists a coupling
$(V(u_0,u_0'),V'(u_0,u_0'))$ such that
\begin{eqnarray}
\label{j-3}
\mP\big\{ \|V(u_0,u_0')-V'(u_0,u_0')\|>\eps \big\}\leq K_{\eps/2}( P_1(u_0,\cdot),P_1(u_0',\cdot))
\end{eqnarray}
holds with $\eps=q\|u_0-u_0'\|$  and  the functions  $V,V': \Omega \times Z \rightarrow H$ are measurable.

It suffices to bound  $K_{\eps/2}( P_1(u_0,\cdot),P_1(u_0',\cdot))$.
Define the transformation $\Psi=\Psi_{u_0,u_0'}$  of the space  $H^1(Q)$  by the following relation
\begin{eqnarray}
\label{j-5}
\Psi(h)=h+\varrho(\|h\|_{H^1(Q)})\Phi(h,u_0)(u_0'-u_0)
\end{eqnarray}
where  $\varrho$ is a smooth function such that $\varrho(a)=1$ for $a\leq \tilde{R}-1$ and
$\varrho(a)=0$ for $a\geq \tilde{R}.$
We denote the law of $\eta_k$ and $\Psi(\eta_k)$  on $B_{\tilde{R}}$ by $\lambda$ and
$\Psi_*(\lambda)$, respectively.
With the help  of    (\ref{h-1})  and \cite[Proposition 5.2]{shirikyan-asens2015},
one has
\begin{eqnarray}
\label{j-1}
K_{\eps/2}( P_1(u_0,\cdot),P_1(u_0',\cdot))\leq 2 \|\lambda-\Psi_*(\lambda)\|.
\end{eqnarray}
With the help of  the   Lipschitz continuity of functions  $\Phi$,
$\Psi$,  by  the definition  (\ref{j-5}), we can verify the conditions of   \cite[Proposition 5.6]{shirikyan-asens2015} with a constant $\kappa$ proportional to $\|u_0-u_0'\|$.
Therefore,
\begin{eqnarray}
  \label{j-2}
   \|\lambda-\Psi_*(\lambda)\| \leq C\|u_0-u_0'\|.
\end{eqnarray}
Combining  (\ref{j-1}), (\ref{j-2}) and (\ref{j-3}), we obtain the desired result  (\ref{j-4}).
\end{proof}

\subsubsection{Proof of uniform Feller property}
\begin{theorem}
\label{2-3}
Let
 Hypotheses  \textbf{(H1)},\textbf{(H2)} and  \textbf{(H3)} hold,
 then for  any $V,f\in C^1(\cA)$,   the family $\{\|\mathcal{B}_n^V\textbf{1}\|_{\infty}^{-1}\mathcal{B}_n^Vf,n\geq 0 \}$
is equicontinuous  on $\cA$. Furthermore,  there is a   $\gamma>0$  such that
\begin{eqnarray} \label{e:UFP}
\begin{split}
& \left|\mathcal{B}_n^Vf(u_0)-\mathcal{B}_n^Vf(u_0') \right|
\\ & \leq C \|\mathcal{B}_n^V\textbf{1}\|_{\infty}(\|f\|_\infty+e^{-\gamma n}\|\nabla f\|_\infty)\|u_0-u_0'\|,
\end{split}
\end{eqnarray}
holds for any $u_0,u_0'\in \cA$ and $n\geq 1$.  Here $C=C(\gamma,\|V\|_\infty,\|\nabla V\|_\infty).$ In particular,
uniform Feller property holds.
\end{theorem}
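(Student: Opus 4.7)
The strategy is to couple two solutions starting from $u_0$ and $u_0'$ using Proposition \ref{1-3}, and then to split the difference $\mathcal{B}_n^V f(u_0) - \mathcal{B}_n^V f(u_0')$ into a ``coupling-success'' contribution and a ``coupling-failure'' contribution. First I would reduce to the case $\|u_0 - u_0'\| \leq d$ (with $d$ given by Proposition \ref{1-3}), since for $\|u_0 - u_0'\| > d$ the trivial bound $|\mathcal{B}_n^V f(u_0) - \mathcal{B}_n^V f(u_0')| \leq 2 \|f\|_\infty \|\mathcal{B}_n^V\mathbf{1}\|_\infty$ is already dominated by $(2\|f\|_\infty/d) \|\mathcal{B}_n^V\mathbf{1}\|_\infty \|u_0 - u_0'\|$. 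Fix a small $q \in (0,1)$ (to be chosen depending on $\|V\|_\infty$) and apply Proposition \ref{1-3} iteratively to build a joint chain $(u_k, u_k')$ under a coupling probability $\mP$, letting the two chains run independently once $\|u_k - u_k'\|$ first exceeds $d$ so that both marginals keep the original Markov law. Introduce the stopping time $\sigma = \inf\{k \geq 1 : \|u_k - u_k'\| > q\|u_{k-1} - u_{k-1}'\|\}$. Iterating \eqref{j-4} and summing the resulting geometric series yields $\mP(\sigma = k) \leq C q^{k-1} \|u_0 - u_0'\|$ and, on $\{\sigma > k\}$, the contraction $\|u_k - u_k'\| \leq q^k \|u_0 - u_0'\|$.

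Writing the difference under the coupling as $\mE\bigl[e^{S_n(u)} f(u_n) - e^{S_n(u')} f(u_n')\bigr]$ with $S_n(u) = \sum_{j=1}^n V(u_j)$, I would split the expectation into the events $\{\sigma > n\}$ and $\{\sigma \leq n\}$. On $\{\sigma > n\}$ the two trajectories contract geometrically, so the $C^1$ regularity of $V$ and $f$ yields $|S_n(u) - S_n(u')| \leq \|\nabla V\|_\infty \|u_0 - u_0'\|/(1-q)$ and $|f(u_n) - f(u_n')| \leq q^n \|\nabla f\|_\infty \|u_0 - u_0'\|$. Inserting these into the elementary inequality $|e^a F - e^b G| \leq \max(e^a,e^b)\bigl(|a-b|\,|F| + |F-G|\bigr)$ and using $\mE[e^{S_n(u)}] = \mathcal{B}_n^V\mathbf{1}(u_0) \leq \|\mathcal{B}_n^V\mathbf{1}\|_\infty$ (and likewise for $u_0'$), the $\{\sigma > n\}$ contribution is bounded by $C \bigl(\|f\|_\infty + q^n \|\nabla f\|_\infty\bigr) \|\mathcal{B}_n^V\mathbf{1}\|_\infty \|u_0 - u_0'\|$, which has exactly the form claimed in \eqref{e:UFP} with $\gamma = -\log q$.

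The main obstacle is the contribution from $\{\sigma \leq n\}$, where the two trajectories may separate. Splitting by the value of $\sigma$ and using the strong Markov property at time $\sigma$ gives
\[
\mE\bigl[\mathbf{1}_{\{\sigma = k\}} e^{S_n(u)}\bigr] = \mE\bigl[\mathbf{1}_{\{\sigma = k\}} e^{S_k(u)} \mathcal{B}_{n-k}^V\mathbf{1}(u_k)\bigr] \leq e^{k\|V\|_\infty} \|\mathcal{B}_{n-k}^V\mathbf{1}\|_\infty \mP(\sigma = k).
\]
To convert $\|\mathcal{B}_{n-k}^V\mathbf{1}\|_\infty$ into $\|\mathcal{B}_n^V\mathbf{1}\|_\infty$, I would prove an auxiliary Harnack-type lemma: there exists $c > 0$ depending only on $\|V\|_\infty$ and on the constants $m, p$ from Proposition \ref{7-1} such that $\mathcal{B}_N^V\mathbf{1}(u) \geq c \|\mathcal{B}_N^V\mathbf{1}\|_\infty$ for every $u \in \cA$ and every sufficiently large $N$. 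The argument is that $\mathcal{B}_N^V\mathbf{1}$ is continuous on the compact set $\cA$, hence exceeds $\|\mathcal{B}_N^V\mathbf{1}\|_\infty/2$ on a ball around any maximizer, while \eqref{h-2} provides a uniform lower bound on the probability of hitting that ball in $m$ steps; combined with the submultiplicative bound $\|\mathcal{B}_{N+m}^V\mathbf{1}\|_\infty \leq e^{m\|V\|_\infty}\|\mathcal{B}_N^V\mathbf{1}\|_\infty$ this yields the claim. Applying it to $N = n-k$ gives $\|\mathcal{B}_{n-k}^V\mathbf{1}\|_\infty \leq c^{-1} e^{k\|V\|_\infty}\|\mathcal{B}_n^V\mathbf{1}\|_\infty$, so the $\{\sigma \leq n\}$ contribution is bounded by
\[
C\|f\|_\infty \|u_0 - u_0'\| \|\mathcal{B}_n^V\mathbf{1}\|_\infty \sum_{k=1}^\infty \bigl(q e^{2\|V\|_\infty}\bigr)^{k-1}.
\]
Choosing $q$ small enough that $q e^{2\|V\|_\infty} < 1$, which is permissible since Proposition \ref{1-3} is available for every $q \in (0,1)$, makes this series finite and completes the bound \eqref{e:UFP}; uniform equicontinuity then follows since the right-hand side is Lipschitz with $n$-independent constant once we take the normalisation $\|\mathcal{B}_n^V\mathbf{1}\|_\infty^{-1}$.
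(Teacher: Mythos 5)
Your overall strategy coincides with the paper's: couple the two trajectories via Proposition~\ref{1-3}, stratify the expectation by the first time the coupling contracts by less than $q$, control the ``failure'' terms by $\mP(\sigma=k)\lesssim q^{k-1}\|u_0-u_0'\|$, and control the ``all-$n$-steps-contract'' term by the geometric decay $\|u_n-u_n'\|\le q^n\|u_0-u_0'\|$. The paper phrases the stratification through events $A_0,\dots,A_n$ (which are exactly $\{\sigma=k+1\}$ for $k<n$ and $\{\sigma>n\}$) and replaces your elementary inequality $|e^aF-e^bG|\le \max(e^a,e^b)(|a-b|\,|F|+|F-G|)$ by a telescoping of the product $\prod a_k-\prod b_k$; these are cosmetic differences, and your preliminary reduction to $\|u_0-u_0'\|\le d$ is a sensible normalisation.

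The one step that does not go through as written is the auxiliary Harnack-type lemma. Your sketch takes a ball $B(u^*,r)$ around a maximiser of $\mathcal{B}_N^V\mathbf{1}$ on which the value stays above $\tfrac12\|\mathcal{B}_N^V\mathbf{1}\|_\infty$, but the radius $r$ is controlled by the modulus of continuity of $\|\mathcal{B}_N^V\mathbf{1}\|_\infty^{-1}\mathcal{B}_N^V\mathbf{1}$, and an $N$-independent such modulus is exactly what Theorem~\ref{2-3} is trying to prove. So the constant $c$ you obtain could a priori shrink with $N$, and the statement ``$c$ depends only on $\|V\|_\infty$, $m$, $p$'' is not established by the given argument. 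Fortunately the lemma is not needed. The identity $\mathcal{B}_n^V\mathbf{1}(u)=\mE_u\bigl[e^{\sum_{i=1}^k V(u_i)}\,\mathcal{B}_{n-k}^V\mathbf{1}(u_k)\bigr]\ge e^{-k\|V\|_\infty}\mathcal{B}_{n-k}^V\mathbf{1}(u)$ yields directly
\begin{equation*}
\|\mathcal{B}_{n-k}^V\mathbf{1}\|_\infty \le e^{k\|V\|_\infty}\|\mathcal{B}_n^V\mathbf{1}\|_\infty,
\end{equation*}
and substituting this in your $\{\sigma=k\}$ estimate costs only an extra factor $e^{k\|V\|_\infty}$, absorbed by the choice $q\,e^{2\|V\|_\infty}<1$ you already impose. (Equivalently: since $\mathcal{B}_n^{V+c}\mathbf{1}=e^{cn}\mathcal{B}_n^V\mathbf{1}$ and the normalisation $\|\mathcal{B}_n^V\mathbf{1}\|_\infty^{-1}\mathcal{B}_n^V$ is unchanged under $V\mapsto V+c$, one may take $V\ge0$ without loss of generality, which is what the paper tacitly does when it writes $\|\mathcal{B}_n^V\mathbf{1}\|_\infty$ in place of $\|\mathcal{B}_{n-k-1}^V\mathbf{1}\|_\infty$.) With this replacement your proof is correct and matches the paper's.
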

\begin{proof}
Uniform Feller property follows from \eqref{e:UFP} immediately. Let us prove \eqref{e:UFP} in the following four steps.

 \emph{
Step 1: construction of coupling processes.}
This coupling is borrowed from \cite{shirikyan-asens2015}, here we give the details for the completeness.
Let $q=q(\|V\|_\infty)\in (0,1)$  be  a constant that will be  determined  later.
By  Proposition  \ref{1-3}, there exist positive  constants  $d=d(q)>0$ and $C=C(d,q)$
  such that for any  two points $u_0,u_0'\in \cA$ with $\|u_0-u_0'\|\leq d$,
  the pair $(P_1(u_0,\cdot),P_1(u_0',\cdot))$ admits a coupling $(V(u_0,u_0'),V'(u_0,u_0'))$ such that (\ref{j-4})
   holds and that the functions  $V,V': \Omega \times Z\rightarrow H$ are measurable, here
    $Z$ is given by (\ref{j-6}).
We now define a coupling operator  by the relation
\begin{eqnarray}
\label{k-1}
  \cR(u_0,u_0')=
  \left\{
  \begin{split}
  & (V(u_0,u_0'),V'(u_0,u_0')), & \text{for } \|u_0-u_0'\|\leq d,
  \\
  &  (S(u_0,\zeta),S(u_0',\zeta') ), & \text{for } \|u_0-u_0'\|>d,
  \end{split}
  \right.
\end{eqnarray}
where  $\zeta$ and $\zeta'$ are independent random variables whose law coincides  with that of  $\eta_1.$ Without loss of generality, we assume that $\zeta,\zeta',V$ and $V'$ are all defined on the same probability space. To stress   the dependence on $\omega,$ we shall sometimes write $\cR(u_0,u_0';\omega)$ instead of  $\cR(u_0,u_0').$

Let $(\Omega_k,\cF_k,\mP_k), k
\geq 1$   be countable many copies of the
probability space on which $\cR$
 is defined and let $(\Omega,\cF,\mP)$ be the direct product of these spaces.
 For   $u_0,u_0'\in \cA,$ set
 \begin{eqnarray*}
   (u_k,u_k')=\cR(u_{k-1},u_{k-1}'; \omega_k), ~~k\geq 1.
 \end{eqnarray*}
For the filtration generated by $\{(u_i,u_i'), 0\leq i\leq k\}$, we denote it by
$\mathscr{F}_k.$
We define the following events which will be used below:
\begin{align*}
  A_0   &=  \{\omega:\|u_1-u_1'\|>q \|u_0-u_0'\|\},
 \\  B_i &=  \{\omega:\|u_i-u_i'\|\leq q \|u_{i-1}-u_{i-1}'\|\},
  \\  A_k   &=  \big(\cap_{i=1}^kB_i\big)\cap B_{k+1}^c, \quad \forall 1\leq k\leq n-1,
  \\ A_{n}   &= \cap_{i=1}^{n}B_i.
\end{align*}


\emph{
Step 2: stratification.}
 We write
\begin{eqnarray} \label{2-10}
\nonumber && \mathcal{B}_n^Vf(u_0)-\mathcal{B}_n^Vf(u_0')=\sum_{k=0}^{n} J_k,
\end{eqnarray}
where
\begin{align*}
  J_{k}   & = \mE \big\{I_{A_k}
  \big[\exp\{\sum_{i=1}^nV(u_i)\}f(u_n)-
  \exp\{\sum_{i=1}^nV(u_i')\}f(u_n')\big]
  \big\}, ~0\leq k\leq n.
\end{align*}
In the below, we will use the notation $\sum_{i=m_1}^{m_2}c_i=0$
 if  $m_1>m_2.$

\emph{
Step 3: estimates of  $J_k$, $0\leq k\leq n-1$.} By triangle inequality,
$$|J_{k}| \le |J_{k,1}|+|J_{k,2}|,$$
with $J_{k,1}=\mE \big\{I_{A_k}
  \exp\{\sum_{i=1}^nV(u_i)\}f(u_n)
  \big\}$ and $J_{k,2}=\mE \big\{I_{A_k}
  \exp\{\sum_{i=1}^nV(u_i')\}f(u_n')
  \big\}$
Noticing that  $\cA$ is an invariant subset for (\ref{4-1}), we have
\begin{eqnarray}
  \nonumber |J_{k,1}|&=&
  \left|\mE \Big\{I_{A_k}
  \exp\{\sum_{i=1}^{k+1}V(u_i)\}\mE \big[\exp\{\sum_{i=k+2}^{n}V(u_i)\}f(u_n)|\mathscr{F}_{k+1}\big]
  \Big\}\right|
  \\
  \label{1-4}  &\leq & \|f\|_\infty  e^{\|V\|_\infty(k+1)}
  \|\mathcal{B}_n^V\textbf{1}\|_{\infty}\mP(A_k).
\end{eqnarray}
By Proposition \ref{1-3}, for $1 \le k \le n-1$,
\begin{eqnarray*}
 \mP(A_k)&=&\mP\big((\cap_{i=1}^kB_i)\cap B_{k+1}^c\big) \\
&=&\mE \Big(I_{\cap_{i=1}^kB_i}\mP\big( B_{k+1}^c \big| \mathscr{F}_k \big)\Big) \\
&\leq&  C_q   \mE \Big(I_{\cap_{i=1}^kB_i} \|u_k-u_k'\|\Big)  \\
&\leq& C_q  q^k\|u_0-u_0'\|,
\end{eqnarray*}
here  $C_q$ is a constant independent of $k.$
Obvious, by Proposition \ref{1-3}, the above estimate also holds for $k=0.$
Combining this inequality with  (\ref{1-4}), we get
\begin{eqnarray}
\label{xin1-6}
  \left|J_{k,1}\right| \leq C_q \|f\|_\infty \|\mathcal{B}_n^V\textbf{1}\|_{\infty}
  ~e^{\|V\|_\infty(k+1)}
  q^k  \|u_0-u_0'\|.
\end{eqnarray}
It is obvious that the same inequality holds for $J_{k,2}$. Hence,
$$|J_{k}| \le C_q \|f\|_\infty \|\mathcal{B}_n^V\textbf{1}\|_{\infty}
  ~e^{\|V\|_\infty(k+1)}
  q^k  \|u_0-u_0'\|.$$

\emph{
Step 4: estimate of $J_n$.}
 Using the fact
\begin{eqnarray*}
  \prod_{k=1}^{\ell }a_k-\prod_{k=1}^{\ell }b_k=\sum_{k=1}^{\ell }a_1\cdots a_{k-1}(a_k-b_k)b_{k+1}\cdots b_{\ell}
\end{eqnarray*}
with  $\ell=n+1,a_{n+1}=f(u_n),b_{n+1}=f(u_n')$
 and
\begin{eqnarray*}
  a_k=\exp\{V(u_k)\},~ b_k=\exp\{V(u_k')\}, \quad 1\leq k\leq n,
\end{eqnarray*}
we arrive at
\begin{eqnarray*}
   && J_n =\sum_{k=1}^n   \mE \Big\{I_{A_n}
  \big[\exp\{\sum_{i=1}^{k-1}V(u_i)\}\big(\exp\{V(u_k)\}-\exp\{V(u_k')\} \big)
  \\ && \quad \quad  \times
  \exp\{\sum_{i=k+1}^nV(u_i')\}f(u_n')\big]
  \Big\}
 +\mE \big\{I_{A_n}\exp\{\sum_{i=1}^nV(u_i)\}\big(f(u_n)-f(u_n')\big)\big\}
  \\ &&:=\sum_{k=1}^{n+1} J_{n,k}.
\end{eqnarray*}
First, consider  the terms   $J_{n,k}, 1\leq k\leq n.$
Notice that, on the event $A_n$, one has
\begin{eqnarray}
\label{h-3}
\|u_k-u_k'\|\leq q^k\|u_0-u_0'\|.
\end{eqnarray}
Therefore,
\begin{eqnarray*}
J_{n,k} &\leq &e^{k\|V\|_\infty}\mE \big\{I_{A_n}
  \|u_k-u_k'\|
  \exp\{\sum_{i=k+1}^nV(u_i')\}f(u_n')\big]
  \big\}
  \\
  &\leq  &e^{k\|V\|_\infty}\mE \Big\{ q^k \|u_0-u_0'\|~\mE\big\{
  \exp\{\sum_{i=k+1}^nV(u_i')\}f(u_n')\big|\mathscr{F}_{k} \big\}
  \Big\}
  \\ &\leq & \|f\|_\infty  e^{k\|V\|_\infty} \|\mathcal{B}_n^V\textbf{1}\|_{\infty}~ q^k \|u_0-u_0'\|.
\end{eqnarray*}
Next, consider the term $J_{n,n+1}.$ By a similar argument, we have
\begin{eqnarray*}
 J_{n,n+1} &\leq &e^{n\|V\|_\infty} \|\nabla f\|_\infty q^n \|u_0-u_0'\|.
\end{eqnarray*}
Combining the above estimates of $J_{n,k}, 1\leq k\leq n+1,$ we obtain
\begin{align}
   \nonumber   J_n & \leq \sum_{k=1}^{n}\|f\|_\infty  e^{k\|V\|_\infty} \|\mathcal{B}_n^V\textbf{1}\|_{\infty}~q^k \|u_0-u_0'\|
  \\    \label{xin1-5} &\quad +e^{n\|V\|_\infty} \|\nabla f\|_\infty q^n \|u_0-u_0'\|.
\end{align}

By  setting   $q$ small enough such that $q e^{\|V\|_\infty}\leq e^{-\gamma}$,
by  (\ref{xin1-5}) and (\ref{xin1-6}), we complete our proof.
\end{proof}


\def\cprime{$'$} \def\cprime{$'$}
  \def\polhk#1{\setbox0=\hbox{#1}{\ooalign{\hidewidth
  \lower1.5ex\hbox{`}\hidewidth\crcr\unhbox0}}}
  \def\polhk#1{\setbox0=\hbox{#1}{\ooalign{\hidewidth
  \lower1.5ex\hbox{`}\hidewidth\crcr\unhbox0}}}
  \def\polhk#1{\setbox0=\hbox{#1}{\ooalign{\hidewidth
  \lower1.5ex\hbox{`}\hidewidth\crcr\unhbox0}}} \def\cprime{$'$}
  \def\polhk#1{\setbox0=\hbox{#1}{\ooalign{\hidewidth
  \lower1.5ex\hbox{`}\hidewidth\crcr\unhbox0}}} \def\cprime{$'$}
  \def\cprime{$'$} \def\cprime{$'$} \def\cprime{$'$}


\begin{thebibliography}{FGRT15}

\bibitem[DV]{DV-1975-83}
M.~D. Donsker and S.~R.~S. Varadhan.
\newblock Asymptotic evaluation of certain {M}arkov process expectations for
  large time. {I}-{IV}.
\newblock {\em Comm. Pure Appl. Math.}

\bibitem[DX11]{DX-2011}
Z.~Dong and Y.~Xie.
\newblock Ergodicity of stochastic 2d navier-stokes equation with l\'evy noise.
\newblock {\em J. Differential Equations}, 1:196--222, 2011.

\bibitem[DXZ11]{DXZ-2011}
Z.~Dong, L.~Xu, and X.~Zhang.
\newblock Invariant measures of stochastic 2{D} {N}avier-{S}tokes equations
  driven by {$\alpha$}-stable processes.
\newblock {\em Electron. Commun. Probab.}, 16:678--688, 2011.

\bibitem[EMS01]{EMS-2001}
W.~E, J.~C. Mattingly, and Ya. Sinai.
\newblock Gibbsian dynamics and ergodicity for the stochastically forced
  {N}avier--{S}tokes equation.
\newblock {\em Comm. Math. Phys.}, 224(1):83--106, 2001.

\bibitem[Eva10]{Eva10}
L.~C. Evans. Partial differential equations. {\em American Mathematical Soc.}, 2010.


\bibitem[FM95]{Flandoli95}
F.~Flandoli and B.~Maslowski.
\newblock {Ergodicity of the 2D Navier--Stokes equation under random
  perturbations}.
\newblock {\em Comm. Math. Phys.}, 172(1):119--141, 1995.

\bibitem[Gou07a]{gourcy-2007b}
M.~Gourcy.
\newblock {A large deviation principle for 2D stochastic Navier--Stokes
  equation}.
\newblock {\em Stochastic Process. Appl.}, 117(7):904--927, 2007.

\bibitem[Gou07b]{gourcy-2007a}
M.~Gourcy.
\newblock {Large deviation principle of occupation measure for a stochastic
  Burgers equation}.
\newblock {\em Ann. Inst. H. Poincar\'e Probab. Statist.}, 43(4):375--408,
  2007.

\bibitem[HM06]{HM-2006}
M.~Hairer and J.~C. Mattingly.
\newblock Ergodicity of the 2{D} {N}avier--{S}tokes equations with degenerate
  stochastic forcing.
\newblock {\em Ann. of Math. (2)}, 164(3):993--1032, 2006.

\bibitem[HM08]{HM-2008}
M.~Hairer and J.~C. Mattingly.
\newblock Spectral gaps in {W}asserstein distances and the 2{D} stochastic
  {N}avier--{S}tokes equations.
\newblock {\em Ann. Probab.}, 36(6):2050--2091, 2008.

\bibitem[JNPS15]{JNPS-cpam2015}
V.~{Jak\v si\'c}, V.~Nersesyan, C.-A. Pillet, and A.~Shirikyan.
\newblock Large deviations from a stationary measure for a class of dissipative
  {PDE}s with random kicks.
\newblock {\em Comm. Pure Appl. Math.}, 68(12):2108--2143, 2015.

\bibitem[Kif90]{kifer-1990}
Y.~Kifer.
\newblock Large deviations in dynamical systems and stochastic processes.
\newblock {\em Trans. Amer. Math. Soc.}, 321(2):505--524, 1990.

\bibitem[KS00a]{KS-cmp2000}
S.~Kuksin and A.~Shirikyan.
\newblock Stochastic dissipative {PDE}s and {G}ibbs measures.
\newblock {\em Comm. Math. Phys.}, 213(2):291--330, 2000.

\bibitem[KS00b]{Kuksin_Shirikyan_2000_CMaPh}
S.~{Kuksin} and A.~{Shirikyan}.
\newblock {Stochastic Dissipative PDE's and Gibbs Measures}.
\newblock {\em Communications in Mathematical Physics}, 213:291--330, 2000.

\bibitem[KS12]{KS-book}
S.~Kuksin and A.~Shirikyan.
\newblock {\em Mathematics of {T}wo-{D}imensional {T}urbulence}.
\newblock Cambridge University Press, Cambridge, 2012.

\bibitem[Ner21]{VN-2021}
V.~Nersesyan.
\newblock The complex ginzburg-landau equation perturbed by a force localised
  both in physical and fourier spaces.
\newblock {\em arXiv:2103.06493}, 2021.

\bibitem[NPX22]{NPX-2022}
V.~Nersesyan, X.~Peng, and L.~Xu.
\newblock Large deviations principle via malliavin calculus for the
  navier-stokes system driven by a degenerate white-in-time noise.
\newblock {\em arXiv:2201.12977}, 2022.

\bibitem[Shi15]{shirikyan-asens2015}
A.~Shirikyan.
\newblock Control and mixing for 2{D} {N}avier--{S}tokes equations with
  space-time localised noise.
\newblock {\em Ann. Sci. \'Ec. Norm. Sup\'er. (4)}, 48(2):253--280, 2015.

\bibitem[Shi17]{shirikyan-cup2017}
A.~Shirikyan.
\newblock {Mixing for the Burgers equation driven by a localised
  two-dimensional stochastic forcing}.
\newblock In K.~Ammari and S.~Gerbi, editors, {\em Evolution Equations: Long
  Time Behavior and Control}, pages 179--194. Cambridge University Press,
  Cambridge, 2017.

\bibitem[Shi21]{Shi21}
A.~Shirikyan.
\newblock{Controllability implies mixing II. Convergence in the dual-Lipschitz metric}.
\newblock{\em J. Eur. Math. Soc.},
  23(4):1381-1422, 2021.  

\bibitem[Wu01]{Wu-2001}
L.~Wu.
\newblock Large and moderate deviations and exponential convergence for
  stochastic damping hamiltonian systems.
\newblock {\em Stochastic Process Appl.}, 91:205--3238, 20001.

\bibitem[WXX21]{WXX-2021}
R.~{Wang}, J.~{Xiong}, and L.~{Xu}.
\newblock Large deviation principle of occupation measures for non-linear
  monotone spdes.
\newblock {\em Sci. China Math.}, 2021.

\end{thebibliography}
\end{document}